\title{The order of dominance of a monomial ideal}
\author{Guillermo Alesandroni}
\address{2000 Rosario, Santa Fe, Argentina}
\email{guillea@okstate.edu, alesandronig@yahoo.com}
\newtheorem{theorem}{Theorem}[section]
\newtheorem{corollary}[theorem]{Corollary}
\newtheorem{lemma}[theorem]{Lemma}
\theoremstyle{definition}
\newtheorem{definition}[theorem]{Definition}
\newtheorem{example}[theorem]{Example}
\newtheorem{construction}[theorem]{Construction}
\DeclareMathOperator{\betti}{b}
\DeclareMathOperator{\pd}{pd}
\DeclareMathOperator{\mdeg}{mdeg}
\DeclareMathOperator{\lcm}{lcm}
\DeclareMathOperator{\Max}{max}
\DeclareMathOperator{\hdeg}{hdeg}
\DeclareMathOperator{\rank}{rank}
\DeclareMathOperator{\codim}{codim}
\DeclareMathOperator{\odom}{odom}
\DeclareMathOperator{\pol}{pol}
\begin{document}
\maketitle
\begin{abstract}
Let $S$ be a polynomial ring in $n$ variables over a field, and consider a monomial ideal $M=(m_1,\ldots,m_q)$ of $S$. We introduce a new invariant, called order of dominance of $S/M$, and denoted $\odom(S/M)$, which has many similarities with the codimension of $S/M$. We use the order of dominance to characterize the class of Scarf ideals that are Cohen-Macaulay, and also to characterize when the Taylor resolution is minimal. In addition, we show that $\odom(S/M)$ has the following properties:
\begin{enumerate}[(i)]
\item $\codim(S/M) \leq \odom(S/M)\leq \pd(S/M)$.
\item $\pd(S/M)=n$ if and only if $\odom(S/M)=n$.
\item $\pd(S/M) = 1$ if and only if $\odom(S/M) = 1$.
\item If $\odom(S/M) = n-1$, then $\pd(S/M) = n-1$.
\item  If $\odom(S/M) = q-1$, then $\pd(S/M) = q-1$.
\end{enumerate}
\end{abstract}

\section{Introduction}
One of the fundamental concepts of dimension theory is that of codimension or height. In the context of monomial ideals this definition is simple and intuitive: if $M$ is a monomial ideal in $S=k[x_1,\ldots,x_n]$, where $k$ is a field, the codimension of the $S$-module $S/M$ is the cardinality of the smallest subset $A$ of $\{x_1,\ldots,x_n\}$ such that every minimal generator of $M$ is divisible by some variable of $A$. The importance of codimension in the study of monomial resolutions is obvious as it tells us how short a resolution can be. That is, $\codim(S/M)\leq \pd(S/M)$.

In this article we introduce a new and sharper lower bound for the projective dimension; a notion that we call order of dominance. Although unorthodox, we define the order of dominance in this introduction after analyzing three motivational examples.

Here is the first example. Let $M_1=(a,b,c)$ be an ideal of $S=k[a,b,c]$. Then, $\pd(S/M_1)\geq \codim(S/M_1)= \#\{a,b,c\}=3$. In this case, $\codim(S/M_1)$ is a good lower bound of $\pd(S/M_1)$ because, as we know, $\pd(S/M_1)=3$. 

However, we can define a slightly different ideal for which codimension is a bad lower bound. Consider, for instance, the ideal $M_2=(ad,bd,cd)$ of the ring $S=k[a,b,c,d]$. In this example we have that $\codim(S/M_2)=\# \{d\}=1$, while 
$\pd(S/M_2)=3$. This apparently discouraging fact inspired the present work.

Let us revisit the definition of codimension given in the first paragraph. It is easy to see that the subset $A$ of $\{x_1,\ldots,x_n\}$, whose cardinality is precisely $\codim(S/M)$, has the following properties:
\begin{enumerate}[(i)]
\item Every minimal generator of $M$ is divisible by some variable of $A$.
\item If $A'\subsetneq A$, there is a minimal generator of $M$ that is not divisible by any variable of $A'$.
\end{enumerate}  
Indeed, the subset $A$ of $\{x_1,\ldots,x_n\}$ with $\#A=\codim(S/M)$ is the smallest subset of $\{x_1,\ldots,x_n\}$ satisfying (i) and (ii) but, usually, not the only one.
It follows from one of our main results (Theorem \ref{Theorem 3.4}) that the cardinality of any subset of $\{x_1\ldots,x_n\}$ satisfying (i) and (ii) is a lower bound for $\pd(S/M)$. Thus, when we want to find a lower bound for $\pd(S/M)$ we can look for the largest cardinality of a subset of $\{x_1,\ldots,x_n\}$ satisfying (i) and (ii).

In the particular case of $M_2$, $\codim(S/M_2)$ is given by the cardinality of $\{d\}$, and $\{d\}$ certainly satisfies (i) and (ii). However, the set $\{a,b,c\}$ also satisfies (i) and (ii) and thus, according to Theorem \ref{Theorem 3.4}, $\pd(S/M_2)\geq \#\{a,b,c\}=3$. This is good news because, in fact, $\pd(S/M_2)=3$.

Now the scenario looks better but, as the next example shows, it is not good enough. Let $M_3=(ad,bd,cd,d^2)$ be an ideal of $S=k[a,b,c,d]$ (note that $M_3$ is obtained from $M_2$ by adding a minimal generator). In this case, $\codim(S/M_3)=\#\{d\}=1$, and $\pd(S/M_3)=4$. By simple inspection, we can verify that $\{d\}$ is the only subset of $\{a,b,c,d\}$ satisfying (i) and (ii). Thus, we are unable to improve the lower bound given by $\codim(S/M_3)$. Yet, we can do one last trick. 

By polarizing, we can interpret $M_3$ as the monomial ideal $M'_3=(a_1d_1,b_1d_1,c_1d_1,d_1d_2)$ of $S'=k[a_1,b_1,c_1,d_1,d_2]$. It is easy to see that $\{a_1,b_1,c_1,d_2\}$ is the largest subset of $\{a_1,b_1,c_1,d_1,d_2\}$ satisfying (i) and (ii). Thus, we have that $\pd(S/M_3)=\pd(S'/M'_3)\geq \#\{a_1,b_1,c_1,d_2\}=4$. This lower bound is optimal, for $\pd(S/M_3)=4$.

We have just introduced the main concept of this paper; the order of dominance. More precisely, if $M$ is a monomial ideal of $S$, $M_{\pol}$ is the polarization of $M$, and we view $M_{\pol}$ as an ideal of the ring $S_{\pol}$, then the \textbf{order of dominance} of $S/M$, denoted $\odom(S/M)$, is the largest cardinality of a set $A$ of variables of $S_{\pol}$, having the following properties:
\begin{enumerate}[(i)]
\item Every minimal generator of $M_{\pol}$ is divisible by some variable of $A$.
\item If $A'\subsetneq A$, then there is a minimal generator of $M_{\pol}$ that is not divisible by any variable of $A'$.
\end{enumerate}

We will show that the order of dominance can be used to give a careful description of some basis elements (including homological degree and multidegree) of the minimal resolution of $S/M$ (Lemma \ref{lemma 2}). In addition, we will use the order of dominance to characterize the class of Scarf ideals that are Cohen-Macaulay (Corollary \ref{C-M}), and also to characterize when the Taylor resolution is minimal (Theorem \ref{Taylor}). Furthermore, we will characterize all Cohen-Macaulay monomial ideals in 3 variables (Corollary \ref{last}).  Below we describe other properties of  $\odom(S/M)$. Let $M=(m_1,\ldots,m_q)$ be an arbitrary monomial ideal in $S=k[x_1,\ldots,x_n]$. Then:
\begin{enumerate}[(a)]
\item $\codim(S/M) \leq \odom(S/M)\leq \pd(S/M)$, Theorem \ref{nets 6}.
\item  $\pd(S/M)=n$ if and only if $\odom(S/M)=n$, Theorem \ref{nets 5}.
\item $\pd(S/M) = 1$ if and only if $\odom(S/M) = 1$, Theorem \ref{Theorem pd odom}.
\item If $\odom(S/M) = n-1$, then $\pd(S/M) = n-1$, Corollary \ref{cor.} (i).
\item  If $\odom(S/M) = q-1$, then $\pd(S/M) = q-1$, Corollary \ref{cor.} (ii).
\item  If M is Scarf, then $\pd(S/M)=\odom(S/M)$, Theorem \ref{ScarfPD}.
\end{enumerate}

\section{Background and Notation}

Throughout this paper $S$ represents a polynomial ring in $n$ variables over a field $k$. In some examples, $n$ takes a specific value, and  the variables are denoted with the letters $a$, $b$, $c$, etc. In Corollary \ref{last}, $S=k[x_1,x_2, x_3]$.  Everywhere else, $n$ is arbitrary, and $S=k[x_1,\ldots, x_n]$. The letter $M$ always represents a monomial ideal in $S$. 

We open this section by defining the Taylor resolution as a multigraded free resolution, something that will turn out to be fundamental in the present work. The construction that we give below can be found in [Me].
 
\begin{construction}
Let $M=(m_1,\ldots,m_q)$. For every subset $\{m_{i_1},\ldots,m_{i_s}\}$ of $\{m_1,\ldots,m_q\}$, with $1\leq i_1<\cdots<i_s\leq q$, 
we create a formal symbol $[m_{i_1},\ldots,m_{i_s}]$, called a \textbf{Taylor symbol}. The Taylor symbol associated to $\{\}$ will be denoted by $[\varnothing]$.
For each $s=0,\ldots,q$, set $F_s$ equal to the free $S$-module with basis $\{[m_{i_1},\ldots,m_{i_s}]:1\leq i_1<\cdots<i_s\leq q\}$ given by the 
${q\choose s}$ Taylor symbols corresponding to subsets of size $s$. That is, $F_s=\bigoplus\limits_{i_1<\cdots<i_s}S[m_{i_1},\ldots,m_{i_s}]$ 
(note that $F_0=S[\varnothing]$). Define
\[f_0:F_0\rightarrow S/M\]
\[s[\varnothing]\mapsto f_0(s[\varnothing])=s\]
For $s=1,\ldots,q$, let $f_s:F_s\rightarrow F_{s-1}$ be given by
\[f_s\left([m_{i_1},\ldots,m_{i_s}]\right)=
 \sum\limits_{j=1}^s\dfrac{(-1)^{j+1}\lcm(m_{i_1},\ldots,m_{i_s})}{\lcm(m_{i_1},\ldots,\widehat{m_{i_j}},\ldots,m_{i_s})}
 [m_{i_1},\ldots,\widehat{m_{i_j}},\ldots,m_{i_s}]\]
 and extended by linearity.
 The \textbf{Taylor resolution} $\mathbb{T}_M$ of $S/M$ is the exact sequence
 \[\mathbb{T}_M:0\rightarrow F_q\xrightarrow{f_q}F_{q-1}\rightarrow\cdots\rightarrow F_1\xrightarrow{f_1}F_0\xrightarrow{f_0} 
 S/M\rightarrow0.\]
 \end{construction}
Following [Me], we define the \textbf{multidegree} of a Taylor symbol $[m_{i_1},\ldots,m_{i_s}]$, denoted $\mdeg[m_{i_1},\ldots,m_{i_s}]$, as follows:
  $\mdeg[m_{i_1},\ldots,m_{i_s}]=\lcm(m_{i_1},\ldots,m_{i_s})$.

\begin{definition}

 Let $M$ be a monomial ideal, and let
 \[\mathbb{F}:\cdots\rightarrow F_i\xrightarrow{f_i}F_{i-1}\rightarrow\cdots\rightarrow F_1\xrightarrow{f_1}F_0\xrightarrow{f_0} S/M\rightarrow 0\]
be a free resolution of $S/M$. 
We say that a basis element $[\sigma]$ of $\mathbb{F}$ has \textbf{homological degree} $i$, denoted $\hdeg[\sigma]=i$, if 
$[\sigma] \in F_i$. $\mathbb{F}$ is said to be a \textbf{minimal resolution} if for every $i$, the differential matrix $\left(f_i\right)$ of $\mathbb{F}$
has no invertible entries.
\end{definition}

\textit{Note}:
From now on, every time that we make reference to a free resolution $\mathbb{F}$ of $S/M$ we will assume that $\mathbb{F}$ is obtained from $\mathbb{T}_M$ by means of consecutive cancellations. To help us remember this convention, the basis elements of a free resolution will always be called Taylor symbols.

\begin{definition}
Let $M$ be a monomial ideal, and let
 \[\mathbb{F}:\cdots\rightarrow F_i\xrightarrow{f_i}F_{i-1}\rightarrow\cdots\rightarrow F_1\xrightarrow{f_1}F_0\xrightarrow{f_0} S/M\rightarrow 0\]
be a minimal free resolution of $S/M$.
\begin{itemize}
 \item For every $i$, the $i^{th}$ \textbf{Betti number} $\betti_i\left(S/M\right)$ of $S/M$ is $\betti_i\left(S/M\right)=\rank(F_i)$.
\item For every $i,j\geq 0$, the \textbf{graded Betti number} $\betti_{ij}\left(S/M\right)$ of $S/M$, in homological degree $i$ and internal degree $j$,
is \[\betti_{ij}\left(S/M\right)=\#\{\text{Taylor symbols }[\sigma]\text{ of }F_i:\deg[\sigma]=j\}.\]
\item For every $i\geq 0$, and every monomial $l$, the \textbf{multigraded Betti number} $\betti_{i,l}\left(S/M\right)$ of $S/M$, in homological degree $i$ and multidegree $l$,
is \[\betti_{i,l}\left(S/M\right)=\#\{\text{Taylor symbols }[\sigma]\text{ of }F_i:\mdeg[\sigma]=l\}.\]
\item The \textbf{projective dimension} $\pd\left(S/M\right)$ of $S/M$ is \[\pd\left(S/M\right)=\max\{i:\betti_i\left(S/M\right)\neq 0\}.\]
\end{itemize}
\end{definition}

Since the idea of polarization will play an important role in section 4, it is convenient to introduce some common notation. If $m=x_1^{\alpha_1}\cdots x_n^{\alpha_n}$ is a monomial in $S$, its polarization will be denoted $m'=x_{1,1}\cdots x_{1,\alpha_1}\cdots x_{n,1}\cdots x_{n,\alpha_n}$. If $G$ is a set of monomials, then the set of all polarizations $m'$ of monomials $m$ of $G$ will be denoted $G_{\pol}$. The polarization of a monomial ideal $M$ of $S$ will be denoted $M_{\pol}$. Finally, $M_{\pol}$ will be regarded as an ideal in $S_{\pol}$, the polynomial ring over $k$ whose variables are the ones that appear in the factorizations of the minimal generators of $M_{\pol}$.

We close this section with the concept of dominance [Al1, Al2] which is at the heart of this work.

\begin{definition}
Let $L$ be a set of monomials, and let $M$ be a monomial ideal with minimal generating $G$.
\begin{itemize}
\item An element $m\in L$ is a \textbf{dominant monomial} (in $L$) if there is a variable $x$, such that for all $m'\in L\setminus\{m\}$, the exponent with 
which $x$ appears in the factorization of $m$ is larger than the exponent with which $x$ appears in the factorization of $m'$. In this case, we say that $m$ is dominant in $x$, and $x$ is a \textbf{dominant variable} for $m$.
\item $L$ is called a \textbf{dominant set} if each of its monomials is dominant. 
\item $M$ is called a \textbf{dominant ideal} if $G$ is a dominant set. 
\item If $G'$ is a dominant set contained in $G$, we will say that $G'$ is a \textbf{dominant subset} of $G$.  (This does not mean that the elements of $G'$ are dominant in $G$, as the concept of dominant monomial always depends on a reference set.)

\end{itemize}
\end{definition}

\begin{example}\label{example 1}
  Let $M$ be minimally generated by $G=\{a^2b,ab^3c,bc^2,a^2c^2\}$, and let $G'=\{a^2b,ab^3c,bc^2\}$. Note that $ab^3c$ is the only dominant monomial in $G$, being $b$ a dominant variable for $ab^3c$. It is easy to check that
  $G'$ is a dominant set and, given that $G'\subseteq G$, $G'$ is a dominant subset of $G$. (Incidentally, notice that two of the dominant monomials in $G'$ are not dominant in $G$.) Finally, the ideal $M'$, minimally generated by $G'$, is a dominant ideal, for $G'$ is a dominant set. 
  \end{example}

\section{Order of Dominance}

\begin{definition}
Let $l=x_1^{\alpha_1}\cdots x_n^{\alpha_n}$, and $l'=x_1^{\beta_1}\cdots x_n^{\beta_n}$ be two monomials. We say that  $l$ \textbf{strongly divides} $l'$, 
 if $\alpha_i<\beta_i$, whenever $\alpha_i\neq 0$. 
\end{definition}

Let $a$, $b$ be elements of $S$, of the form $a=\alpha x_1^{\alpha_1}\cdots x_n^{\alpha_n}$, $b=\beta x_1^{\beta_1}\cdots x_n^{\beta_n}$, where $\alpha$, $\beta \in k$, and 
$\alpha_1,\ldots,\alpha_n,\beta_1,\ldots,\beta_n \geq 0$. We will say that $a$ and $b$ \textbf{have the same scalars} if $\alpha=\beta$.

\begin{lemma}\label{lemma 1}
Let $M$ be minimally generated by $G$, and let $\mathbb{F}$ be a minimal resolution of $S/M$. Let $\lcm(G)= x_1^{\alpha_1}\cdots x_n^{\alpha_n}$, and let 
$1\leq q\leq n$. Suppose that the following conditions hold:
\begin{enumerate}[(i)]
\item $G$ contains a dominant set $\{d_1,\ldots,d_q\}$ such that, for all $i=1,\ldots,q$, $d_i$ is dominant in $x_i$, and $x_i$ appears with exponent $\alpha_i$ in the factorization of $d_i$.
\item Every monomial in $G$ is divisible by at least one of $x_1^{\alpha_1},\ldots, x_q^{\alpha_q}$.
\end{enumerate}
Then there is a Taylor symbol $[\sigma]$ in the basis of $\mathbb{F}$ such that $\hdeg[\sigma]=q$, and $\mdeg[\sigma]=x_1^{\alpha_1}\cdots x_q^{\alpha_q} x_{q+1}^{\beta_{q+1}} \cdots x_n^{\beta_n}$, where 
$\beta_{q+1} \leq \alpha_{q+1}, \ldots, \beta_n\leq \alpha_n$.
\end{lemma}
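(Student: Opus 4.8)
The plan is to produce the required basis element by exhibiting one Taylor symbol of $\mathbb{T}_M$ that survives every consecutive cancellation, hence remains in $\mathbb{F}$. Call a subset $\tau$ of the minimal generators a \emph{transversal} if for each $i=1,\dots,q$ some element of $\tau$ carries $x_i$ to the power $\alpha_i$ (equivalently $\lcm(\tau)$ has $x_i$-exponent $\alpha_i$); hypotheses (i) and (ii) say exactly that $\{d_1,\dots,d_q\}$ is a dominant transversal of size $q$, with $d_i$ dominant in $x_i$. Because a multigraded differential can have a unit entry only between symbols of equal multidegree, consecutive cancellations pair up Taylor symbols of equal multidegree; so it suffices to find a multidegree $l$ with $x_i$-exponent $\alpha_i$ for $i\le q$ for which $\betti_{q,l}(S/M)\neq 0$. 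Any such $l$ divides $\lcm(G)$, so its $x_j$-exponent is automatically $\le\alpha_j$ for $j>q$, and $l$ is then exactly of the asserted form $x_1^{\alpha_1}\cdots x_q^{\alpha_q}x_{q+1}^{\beta_{q+1}}\cdots x_n^{\beta_n}$.

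I would translate the nonvanishing into reduced simplicial homology. For a monomial $l\mid\lcm(G)$ let $\Delta_{<l}$ be the complex on the generators dividing $l$ whose faces are the subsets $\tau$ with $\lcm(\tau)$ a \emph{proper} divisor of $l$. The standard identification of multigraded Betti numbers with Koszul homology gives $\betti_{i,l}(S/M)=\dim_k\widetilde H_{i-2}(\Delta_{<l};k)$ (one may also read this off the multidegree-$l$ strand of $\mathbb{T}_M$ and its cancellations directly). Thus the statement reduces to finding a transversal multidegree $L$ with $\widetilde H_{q-2}(\Delta_{<L})\neq 0$.

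The candidate is $L:=\lcm(\sigma)$, where $\sigma=\{e_1,\dots,e_q\}$ is a dominant transversal of size $q$ chosen with $\deg L$ minimal; one exists since $\{d_1,\dots,d_q\}$ is one. Dominance gives each $e_j$ a private variable $y_j$ in which it strictly dominates the other elements of $\sigma$, so $e_j$ is the unique element of $\sigma$ attaining $L_{y_j}$ (the exponent of $y_j$ in $L$); hence every proper subset $\tau\subsetneq\sigma$ has $\lcm(\tau)\neq L$, i.e. the boundary $\partial\langle e_1,\dots,e_q\rangle$ of the $(q-1)$-simplex on $\sigma$ sits inside $\Delta_{<L}$ and carries a fundamental $(q-2)$-cycle $z$. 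Minimality of $\deg L$, together with hypothesis (ii), should then force
\[ (\star)\qquad \text{every minimal generator } m\mid L \text{ attains } L_{y_j} \text{ for some } j: \]
if some $m$ failed $(\star)$ it would lie strictly below $L$ in every private variable, and an exchange — deleting an element of $\sigma$ that is redundant for covering $\{1,\dots,q\}$ (a counting argument produces one, and (ii) guarantees $m$ itself covers some coordinate, so the deleted element is an $e_k$) and inserting $m$ — would yield a dominant transversal of size $q$ of strictly smaller lcm, a contradiction. Property $(\star)$ says precisely that no face spanned by all of $y_1,\dots,y_q$ occurs, so the private-variable sub-nerve is the full boundary $\partial\Delta^{q-1}$.

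The last step, showing $z$ is not a boundary in $\Delta_{<L}$, is the crux and the step I expect to be the main obstacle. The clean route is to build a simplicial map $\psi\colon\Delta_{<L}\to\partial\Delta^{q-1}$ onto the abstract boundary on $\{1,\dots,q\}$ by sending each $m\mid L$ to an index $j$ with $m_{y_j}=L_{y_j}$ (possible by $(\star)$); this is forced to send $e_j\mapsto j$, hence carries $z$ to the fundamental cycle of $\partial\Delta^{q-1}$, which is nonzero in $\widetilde H_{q-2}$, giving $\betti_{q,L}(S/M)\neq 0$ and the desired symbol. The difficulty is that $\psi$ must avoid sending \emph{any} face of $\Delta_{<L}$ onto all of $\{1,\dots,q\}$ — no face may become ``rainbow'' — while the extra generators (those outside $\sigma$ dividing $L$) can in principle span high-dimensional faces attaining every private maximum. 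Proving that the index choice can be made globally consistent, and that the exchange in $(\star)$ can always be carried out preserving dominance, the transversal property, and size $q$ (this is where condition (ii) and the minimality of $\sigma$ must be used in tandem), is the delicate heart of the argument; the worked small cases strongly suggest it always goes through, but it is exactly the point that needs care.
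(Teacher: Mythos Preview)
Your proposal takes a route genuinely different from the paper's. The paper never touches the Hochster formula or any simplicial homology. Instead it augments $G$ to $G'=G\cup\{x_{q+1}^{\alpha_{q+1}+1},\dots,x_n^{\alpha_n+1}\}$, so that $L=\{d_1,\dots,d_q,x_{q+1}^{\alpha_{q+1}+1},\dots,x_n^{\alpha_n+1}\}$ is a dominant set of full cardinality $n$ in $G'$ with no element of $G'$ strongly dividing $\lcm(L)$; an earlier result (\cite{Al2}, Theorem~5.2) then guarantees a surviving Taylor symbol of homological degree $n$ and multidegree $\lcm(L)$ in the minimal resolution of $S/M'$. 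The paper transfers this back to $M$ by an explicit bijection $f$ between the Taylor symbols $[\sigma]\in\mathbb T_M$ with $x_1^{\alpha_1}\cdots x_q^{\alpha_q}\mid\mdeg[\sigma]$ and those $[\tau]\in\mathbb T_{M'}$ with $\mdeg[\tau]=\lcm(L)$, together with an induction on the number of consecutive cancellations showing that a cancellation among the former forces a matching cancellation (with the same scalar) among the latter. The augmentation trick thus reduces everything to the already-established full-dimensional case and avoids any minimisation over transversals.

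Your approach has two genuine gaps that you flag yourself but do not close, and they are not matters of detail. First, the exchange step for $(\star)$ conflates two unrelated properties: condition~(ii) guarantees that the incoming generator $m$ \emph{covers} some coordinate $i\le q$ (i.e.\ $x_i^{\alpha_i}\mid m$), but for $(\sigma\setminus\{e_k\})\cup\{m\}$ to remain \emph{dominant} you need $m$ to possess a variable in which it strictly beats every remaining $e_j$. Nothing in the hypotheses rules out $m$ lying weakly below $\lcm(\sigma\setminus\{e_k\})$ in every variable, in which case the swapped set is no longer dominant and the minimality contradiction evaporates. (Your parenthetical ``so the deleted element is an $e_k$'' is also not right as stated --- $m$ covering a coordinate does not make $m$ non-redundant --- although a separate pigeonhole argument does salvage the \emph{transversal} part of the swap.) Second, even granting $(\star)$, it only lets you colour vertices; it does not prevent rainbow faces. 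If $\tau$ is a face of $\Delta_{<L}$ because $\lcm(\tau)$ falls short of $L$ in a variable $z\notin\{y_1,\dots,y_q\}$, then $\tau$ can still attain every $L_{y_j}$; if in addition $\tau$ contains, for each $j$, an element with $J(\cdot)=\{j\}$, the colouring is forced and $\psi$ cannot land in $\partial\Delta^{q-1}$. Both obstacles sit exactly where ``minimal-degree dominant transversal'' would have to do real combinatorial work, and you have not shown that it does.
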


\begin{proof}
Let $G'=G\cup\{x_{q+1}^{\alpha_{q+1}+1},\ldots, x_n^{\alpha_n+1}\}$, and let $M'$ be the ideal (minimally) generated by $G'$. Notice that the set $L=\{d_1,\ldots,d_q, x_{q+1}^{\alpha_{q+1}+1},\ldots,x_n^{\alpha_n+1}\}$ is dominant, of cardinality $n$, and such that each $d_i$ is dominant in $x_i$, and each $x_{q+i}^{\alpha_{q+i}+1}$ is dominant in $x_{q+i}$. Also,
\[\lcm(L)=\lcm(G')=x_1^{\alpha_1} \cdots x_q^{\alpha_q} x_{q+1}^{\alpha_{q+1}+1} \cdots x_n^{\alpha_n +1}.\]
By hypothesis, each element of $G$ is divisible by at least one of $x_1^{\alpha_1},\ldots,x_q^{\alpha_q}$. It follows that no monomial of $G'$ strongly divides $\lcm(L)$. \\
Let $A=\{[\sigma]\in \mathbb{T}_M: x_1^{\alpha_1} \cdots x_q^{\alpha_q} \mid \mdeg[\sigma]\}$, and $B=\{[\sigma] \in \mathbb{T}_{M'} :\mdeg[\sigma]=\lcm(L)\}$. Let 

\begin{align*}
f:A & \rightarrow  B\\
[l_1 ,\ldots, l_k] & \rightarrow  [l_1,\ldots,l_k,x_{q+1}^{\alpha_{q+1}+1},\ldots,x_n^{\alpha_n +1}].
\end{align*}

Notice that $f$ is bijective. Moreover, if we define $A_i=\{[\sigma]\in A : \hdeg[\sigma]=i\}$, and $B_i=\{[\sigma]\in B : \hdeg[\sigma]=i+(n-q)\}$, then $f$ defines a bijection between $A_i$ and $B_i$.\\
Suppose that $\mathbb{F}_M$ is a free resolution of $S/M$, obtained from $\mathbb{T}_M$ by doing the consecutive cancellations
\[
\begin{array}{cclclcc}
0&\rightarrow& S[\sigma_1]& \rightarrow & S[\tau_1] &\rightarrow &0,\\
  & \vdots\\
  0&\rightarrow& S[\sigma_r]& \rightarrow & S[\tau_r] &\rightarrow &0,
\end{array}\]
where $[\sigma_1],\ldots,[\sigma_r],[\tau_1],\ldots,[\tau_r]\in A$. \\
We will show that it is possible to obtain a free resolution $\mathbb{F}_{M'}$ (of $S/M'$) from $\mathbb{T}_{M'}$, by doing the cancellations
\[
\begin{array}{cclclcc}
0&\rightarrow& S f[\sigma_1]& \rightarrow & Sf[\tau_1] &\rightarrow &0,\\
  & \vdots\\
  0&\rightarrow& Sf[\sigma_r]& \rightarrow & Sf[\tau_r] &\rightarrow &0,
\end{array}\]
with the property that if $a_{\sigma\tau}^{(r)}$ is an entry of $\mathbb{F}_M$, determined by $[\sigma],[\tau]\in A$, then $f[\sigma],f[\tau]$ are in the basis of $\mathbb{F}_{M'}$, and the entry $b_{\tau\sigma}^{(r)}$, determined by them, has the same scalar as $a_{\tau\sigma}^{(r)}$. The proof is by induction on $r$.\\
If $r=0$, then $\mathbb{F}_M=\mathbb{T}_M$ and $\mathbb{F}_{M'}=\mathbb{T}_{M'}$. Thus, if $[\sigma],[\tau]\in A$, then $f[\sigma],f[\tau]\in \mathbb{F}_{M'}$. In particular, if $[\tau]$ is a facet of $[\sigma]$, $f[\tau]$ is a facet of 
$f[\sigma]$, and these Taylor symbols are of the form\\
$
[\sigma] =   [l_1,\ldots,l_k] , \quad \quad \quad \quad f[\sigma]= [l_1,\ldots,l_k,x_{q+1}^{\alpha_{q+1}+1},\ldots,x_n^{\alpha_n +1}]$\\
$[\tau] = [l_1,\ldots,\widehat{l_i},\ldots,l_k],  \quad f[\tau]  = [l_1,\ldots,\widehat{l_i},\ldots,l_k,x_{q+1}^{\alpha_{q+1}+1},\ldots,x_n^{\alpha_n +1}]. $\\
Therefore,
\[a_{\tau\sigma}^{(0)}=(-1)^{i+1}\dfrac{\mdeg[\sigma]}{\mdeg[\tau]}, \quad b_{\tau\sigma}^{(0)}=(-1)^{i+1}\dfrac{\mdeg f[\sigma]}{\mdeg f[\tau]}\]
On the other hand, if $[\tau] $ is not a facet of $[\sigma]$, $f[\tau]$ is not a facet of $f[\sigma]$, and $a_{\sigma\tau}^{(0)}=0=b_{\tau\sigma}^{(0)}$. In either case, $a_{\tau\sigma}^{(0)}$ and $b_{\tau\sigma}^{(0)}$ have the same scalar (either $(-1)^{i+1}$ or $0$).\\
Let us assume that our claim holds for $r-1$. Now, let us prove it for $r$.\\
Let $\mathbb{G}_M$ (respectively, $\mathbb{G}_{M'}$) be the resolution obtained from $\mathbb{T}_M$ (respectively, $\mathbb{T}_{M'}$) by doing the cancellations
\[0\rightarrow S[\sigma_i]\rightarrow S[\tau_i]\rightarrow 0, \quad i=1,\ldots,r-1\]
(respectively, $0\rightarrow Sf[\sigma_i] \rightarrow Sf[\tau_i]\rightarrow 0, \quad i=1,\ldots,r-1$).
By induction hypothesis, if $a_{\tau\sigma}^{(r-1)}$ is an entry of $\mathbb{G}_M$, determined by basis elements $[\sigma],[\tau]\in A$, then $f[\sigma],f[\tau]$ are in the basis of $\mathbb{G}_{M'}$, and the entry 
$b_{\tau\sigma}^{(r-1)}$ determined by them, has the same scalar as $a_{\tau\sigma}^{(r-1)}$. \\
Since neither $[\sigma_r]$ nor $[\tau_r]$ is in $\{[\sigma_1],\ldots,[\sigma_{r-1}],[\tau_1],\ldots,[\tau_{r-1}]\}$, it follows that neither $f[\sigma_r]$ nor $f[\tau_r]$ is in $\{f[\sigma_1],\ldots,f[\sigma_{r-1}],f[\tau_1],\ldots,f[\tau_{r-1}]\}$.
This means that $f[\sigma_r],f[\tau_r]$ are in the basis of $\mathbb{G}_{M'}$. Let us define $\mathbb{F}_M$ from $\mathbb{G}_M$ by doing the consecutive cancellation
\[0\rightarrow S[\sigma_r]\rightarrow S[\tau_r]\rightarrow 0.\]
Since $a_{\tau_r\sigma_r}^{(r-1)}$ is invertible, $a_{\tau_r\sigma_r}^{(r-1)}\neq 0$ and, given that $a_{\tau_r\sigma_r}^{(r-1)}$, $b_{\tau_r\sigma_r}^{(r-1)}$ have the same scalar, $b_{\tau_r\sigma_r}^{(r-1)}\neq 0$. In addition, since $\mdeg f[\sigma_r]=\mdeg f[\tau_r]$, we must have that $b_{\tau_r\sigma_r}^{(r-1)}\in k\setminus\{0\}$. Hence, we can define $\mathbb{F}_{M'}$ from $\mathbb{G}_{M'}$ by doing the consecutive cancellation
\[0\rightarrow S f[\sigma_r]\rightarrow S f[\tau_r]\rightarrow 0.\]
Moreover, if $a_{\tau\sigma}^{(r)}$ is an entry of $\mathbb{F}_M$, determined by two Taylor symbols $[\sigma],[\tau]\in A$, then neither $[\sigma]$ nor $[\tau]$ is in $\{[\sigma_1],\ldots,[\sigma_r],[\tau_1],\ldots,[\tau_r]\}$, and thus, neither $f[\sigma]$ nor $f[\tau]$ is in $\{f[\sigma_1],\ldots,f[\sigma_r],f[\tau_1],\ldots,f[\tau_r]\}$. This means that $f[\sigma]$, $f[\tau]$ are in the basis of $\mathbb{F}_{M'}$. Finally, we need to show that the entry 
$b_{\tau\sigma}^{(r)}$, determined by $f[\sigma],f[\tau]$ has the same scalar as $a_{\tau\sigma}^{(r)}$. By [Al1],
\[a_{\tau\sigma}^{(r)}=a_{\tau\sigma}^{(r-1)}-\dfrac{a_{\tau\sigma_r}^{(r-1)}a_{\tau_r\sigma}^{(r-1)}}{a_{\tau_r\sigma_r}^{(r-1)}}, \quad  \text{and}\quad b_{\tau\sigma}^{(r)}=b_{\tau\sigma}^{(r-1)}-\dfrac{b_{\tau\sigma_r}^{(r-1)}b_{\tau_r\sigma}^{(r-1)}}{b_{\tau_r\sigma_r}^{(r-1)}}.\]

By induction hypothesis, the pairs $a_{\tau\sigma}^{(r-1)}$ and $b_{\tau\sigma}^{(r-1)}$, $a_{\tau\sigma_r}^{(r-1)}$ and $b_{\tau\sigma_r}^{(r-1)}$, $a_{\tau_r\sigma}^{(r-1)}$ and $b_{\tau_r\sigma}^{(r-1)}$, 
$a_{\tau_r\sigma_r}^{(r-1)}$ and $b_{\tau_r\sigma_r}^{(r-1)}$, have the same respective scalars. Therefore, $a_{\tau\sigma}^{(r)}$ and $b_{\tau\sigma}^{(r)}$ have the same scalar, and our claim is proven.\\
Let us take $r$ to be as large as possible. That is, let $\mathbb{F}_M$ be obtained from $\mathbb{T}_M$ by means of $r$ consecutive cancellations between pairs of elements of $A$, with the property that if 
$a_{\tau\sigma}$ is an entry of $\mathbb{F}_M$, determined by Taylor symbols $[\sigma],[\tau]$ of $A$, then $a_{\tau\sigma}$ is noninvertible. It follows from [Al2, Theorem 3.3] that if $\mathbb{H}_M$ is a minimal resolution of 
$S/M$ obtained from $\mathbb{F}_M$ by means of consecutive cancellations, those consecutive cancellations do not involve elements of $A$. Suppose, by means of contradiction, that no Taylor symbol of $\mathbb{H}_M$ in homological degree $q$ belongs to $A$. This implies that no Taylor symbol of $\mathbb{F}_M$ in homological degree $q$ belongs to $A$. Since $f\restriction{A_q}: A_q\rightarrow B_q$ is a bijection that sends elements in homological degree $q$ to elements in homological degree $q+(n-q)=n$, it follows that no Taylor symbol of $\mathbb{F}_{M'}$ in homological degree $n$ belongs to $B$. Thus, if $\mathbb{H}_{M'}$ is a minimal resolution of 
$S/M'$, obtained from $\mathbb{F}_{M'}$ by means of consecutive cancellations, no Taylor symbol of $\mathbb{H}_{M'}$ in homological degree $n$ belongs to $B$. But given that the minimal generating set $G'$ of $M'$ contains the dominant set $L$, of cardinality $n$, such that no element of $G'$ strongly divides $\lcm(L)$, it follows from [Al2, Theorem 5.2] that $\mathbb{H}_{M'}$ must contain a basis element $[\theta]$, such that $\hdeg[\theta]=n$, and $\mdeg[\theta]=\lcm(L)$, a contradiction.\\
We conclude that the minimal resolution of $S/M$ contains a Taylor symbol $[\sigma]$, such that $\hdeg[\sigma]=q$, and $[\sigma]\in A$.
\end{proof}

\begin{lemma}\label{lemma 2}
Let $M$ be minimally generated by $G$, and let $\mathbb{F}$ be a minimal resolution of $S/M$. Let $\lcm(G)=x_1^{\alpha_1} \cdots x_n^{\alpha_n}$, and let $1\leq i_1< \cdots < i_q\leq n$, where $1\leq q\leq n$. Suppose that the following conditions hold:
\begin{enumerate}[(i)]
\item $G$ contains a dominant set $\{d_1,\ldots,d_q\}$ such that, for all $j=1,\ldots, q$, $d_j$ is dominant in $x_{i_j}$, and $x_{i_j}$ appears with exponent $\alpha_{i_j}$ in the factorization of $d_j$.
\item Every monomial in $G$ is divisible by at least one of $x_{i_1}^{\alpha_{i_1}},\ldots, x_{i_q}^{\alpha_{i_q}}$.
\end{enumerate}
Then there is a Taylor symbol $[\sigma]$ in the basis of $\mathbb{F}$ such that $\hdeg[\sigma]=q$, and $\mdeg[\sigma]=x_1^{\lambda_1} \cdots x_n^{\lambda_n}$, where $\lambda_j=\alpha_j$ if $j\in \{i_1,\ldots,i_q\}$, and $\lambda_j \leq \alpha_j$ if $j \notin \{i_1,\ldots,i_q\}$.
\end{lemma}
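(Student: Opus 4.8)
The plan is to reduce this to Lemma \ref{lemma 1} by a change of variables. Lemma \ref{lemma 1} is exactly the special case of the present statement in which $\{i_1,\ldots,i_q\}=\{1,\ldots,q\}$, so the only thing missing is a relabeling that moves the distinguished variables $x_{i_1},\ldots,x_{i_q}$ into the first $q$ positions.

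First I would fix a permutation $\pi$ of $\{1,\ldots,n\}$ with $\pi(i_j)=j$ for $j=1,\ldots,q$, sending the remaining indices bijectively onto $\{q+1,\ldots,n\}$. This $\pi$ induces the $k$-algebra automorphism $\phi$ of $S$ determined by $x_i\mapsto x_{\pi(i)}$. Set $\tilde G=\phi(G)$ and let $\tilde M$ be the ideal it generates. Since $\phi$ only renames variables, $\tilde G$ is the minimal generating set of $\tilde M$, and $\lcm(\tilde G)=\phi(\lcm(G))=x_1^{\gamma_1}\cdots x_n^{\gamma_n}$, where $\gamma_k=\alpha_{\pi^{-1}(k)}$; in particular $\gamma_j=\alpha_{i_j}$ for $j\leq q$. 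I would then verify that $\tilde M$ meets the hypotheses of Lemma \ref{lemma 1}: the set $\{\phi(d_1),\ldots,\phi(d_q)\}\subseteq\tilde G$ is dominant with $\phi(d_j)$ dominant in $x_j$ and $x_j$ occurring to the exponent $\gamma_j=\alpha_{i_j}$ in $\phi(d_j)$, which is condition (i); and divisibility of each monomial of $G$ by one of $x_{i_1}^{\alpha_{i_1}},\ldots,x_{i_q}^{\alpha_{i_q}}$ translates, under $\phi$, into divisibility of each monomial of $\tilde G$ by one of $x_1^{\gamma_1},\ldots,x_q^{\gamma_q}$, which is condition (ii).

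Lemma \ref{lemma 1} then yields a Taylor symbol $[\tilde\sigma]$ in a minimal resolution $\tilde{\mathbb{F}}$ of $S/\tilde M$ with $\hdeg[\tilde\sigma]=q$ and $\mdeg[\tilde\sigma]=x_1^{\gamma_1}\cdots x_q^{\gamma_q}x_{q+1}^{\delta_{q+1}}\cdots x_n^{\delta_n}$, where $\delta_k\leq\gamma_k$ for $k>q$. Applying $\phi^{-1}$ gives a Taylor symbol $[\sigma]=\phi^{-1}[\tilde\sigma]$ whose multidegree is $\phi^{-1}(\mdeg[\tilde\sigma])$, so that the exponent $\lambda_{\pi^{-1}(k)}$ of $x_{\pi^{-1}(k)}$ in $\mdeg[\sigma]$ equals the $k$th exponent above. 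For $k=j\leq q$ this reads $\lambda_{i_j}=\gamma_j=\alpha_{i_j}$, and for $k>q$ it reads $\lambda_{\pi^{-1}(k)}=\delta_k\leq\gamma_k=\alpha_{\pi^{-1}(k)}$; since the indices $\pi^{-1}(k)$ with $k>q$ range over $\{1,\ldots,n\}\setminus\{i_1,\ldots,i_q\}$, this is exactly the asserted multidegree, and $\hdeg[\sigma]=q$.

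The one point requiring care—and the only genuine obstacle—is justifying that $\phi$ transports minimal resolutions to minimal resolutions compatibly with $\hdeg$ and $\mdeg$, so that the symbol found for $S/\tilde M$ really does correspond to one for $S/M$. Here $\phi$ identifies the Taylor symbol $[m_{j_1},\ldots,m_{j_s}]$ of $\mathbb{T}_M$ with $[\phi(m_{j_1}),\ldots,\phi(m_{j_s})]$ of $\mathbb{T}_{\tilde M}$, preserving homological degree and carrying each multidegree $l$ to $\phi(l)$. Under this identification a differential entry $(-1)^{j+1}\mdeg[\sigma]/\mdeg[\tau]$ maps to $(-1)^{j+1}\phi(\mdeg[\sigma])/\phi(\mdeg[\tau])$, which is a nonzero scalar precisely when the original is. Hence every sequence of consecutive cancellations on $\mathbb{T}_M$ matches, step for step, a sequence on $\mathbb{T}_{\tilde M}$, and a minimal resolution of $S/M$ corresponds to a minimal resolution of $S/\tilde M$. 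With this correspondence established, pulling $[\tilde\sigma]$ back through $\phi^{-1}$ completes the argument.
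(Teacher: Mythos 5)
Your proof is correct and follows essentially the same route as the paper: the paper also reduces Lemma \ref{lemma 2} to Lemma \ref{lemma 1} by a permutation $f$ of $\{1,\ldots,n\}$ with $f(j)=i_j$, simply renaming the variables as $y_j=x_{f(j)}$ and applying Lemma \ref{lemma 1} in the relabeled notation. Your additional step of transporting minimal resolutions through the induced automorphism $\phi$ just makes explicit what the paper treats as a purely notational relabeling, and it is carried out correctly.
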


\begin{proof}
Let $f$ be a permutation of $\{1,\ldots,n\}$, such that $f(j)=i_j$, for all $j=1,\ldots,q$. For all $j=1,\ldots,n$, we define $y_j=x_{f(j)}$; and $\delta_j=\alpha_{f(j)}$. Then $\lcm(G)=y_1^{\delta_1}\cdots y_n^{\delta_n}$; each 
$d_j$ is dominant in $y_j$, and $y_j$ appears with exponent $\delta_j$ in the factorization of $d_j$. Moreover, each monomial in $G$ is divisible by at least one of $y_1^{\delta_1},\ldots,y_q^{\delta_q}$. By Lemma \ref{lemma 1}, 
$\mathbb{F}$ has a basis element $[\sigma]$, such that $\hdeg[\sigma]=q$ and $\mdeg[\sigma]=y_1^{\delta_1} \cdots y_q^{\delta_q} y_{q+1}^{\epsilon_{q+1}} \cdots y_n^{\epsilon_n}$, where 
$\epsilon_{q+1}\leq \delta_{q+1}, \ldots, \epsilon_n\leq \delta_n$. With our original terminology, this means that $\mathbb{F}$ has a Taylor symbol $[\sigma]$, such that $\hdeg[\sigma]=q$, and 
$\mdeg[\sigma]= x_1^{\lambda_1} \cdots x_n^{\lambda_n}$, where $\lambda_j=\alpha_j$ if $j\in \{i_1,\ldots,i_q\}$, and $\lambda_j\leq \alpha_j$ if $j\notin \{i_1,\ldots,i_q\}$.
\end{proof}

\begin{definition}\label{comb.def.}
Let $M$ be minimally generated by $G$. Let $\mathscr{I}$ be the class of all sequences $1\leq i_1< \cdots <i_q \leq n$, with $q\geq 1$, such that the following conditions hold:
\begin{enumerate}[(i)]
\item $G$ contains a dominant set $D=\{d_1,\ldots,d_q\}$ such that each $d_j$ is dominant in $x_{i_j}$.
\item If $\lcm(D)=x_{1}^{\alpha_{1}} \cdots x_{n}^{\alpha_{n}}$, and $m$ is an element of $G$ dividing $\lcm(D)$, then $m$ is divisible by at least one of $x_{i_1}^{\alpha_{i_1}},\ldots,x_{i_q}^{\alpha_{i_q}}$.
\end{enumerate}
We define  the \textbf{order of dominance} of $S/M$, denoted $\odom (S/M)$, as the maximum of the cardinalities of the sequences of $\mathscr{I}$. 
\end{definition}

In the next section we give a more intuitive definition of order of dominance. (The definition in the next section is much more similar to the one given in the introduction.)

\begin{theorem}\label{Theorem 3.4}
Let $\odom (S/M)=q$. Then, $\pd(S/M)\geq q$, and $\betti_r(S/M)\geq {q\choose r}$, for all $r$.
\end{theorem}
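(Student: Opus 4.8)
The plan is to reduce the whole statement to Lemma~\ref{lemma 2} applied to a family of sub-ideals, one for each subset of the dominant set witnessing $\odom(S/M)=q$, and then to count distinct multidegrees. Since $\betti_q(S/M)\ge{q\choose q}=1$ already gives $\pd(S/M)\ge q$, it suffices to prove $\betti_r(S/M)\ge{q\choose r}$ for $1\le r\le q$. First I would fix a sequence $i_1<\cdots<i_q$ in $\mathscr{I}$ realizing $\odom(S/M)=q$, with dominant set $D=\{d_1,\ldots,d_q\}\subseteq G$ where $d_j$ is dominant in $x_{i_j}$, and set $\lcm(D)=x_1^{\alpha_1}\cdots x_n^{\alpha_n}$. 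For each $r$-subset $T\subseteq\{1,\ldots,q\}$ I would form the dominant subset $D_T=\{d_j:j\in T\}$ and the ideal $M_T$ generated by $G_T=\{m\in G:m\mid\lcm(D_T)\}$; since $D_T\subseteq G_T$ we have $\lcm(G_T)=\lcm(D_T)$, and the exponent of each $x_{i_j}$ ($j\in T$) in this lcm is still $\alpha_{i_j}$.

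Next I would verify that $M_T$ with the subsequence $(i_j)_{j\in T}$ satisfies both hypotheses of Lemma~\ref{lemma 2}. Hypothesis (i) is routine, since a subset of a dominant set is dominant and dominance of $d_j$ in $x_{i_j}$ is inherited by $D_T$. The real point is hypothesis (ii): that every $m\in G_T$ is divisible by some $x_{i_j}^{\alpha_{i_j}}$ with $j\in T$. Here I would use Definition~\ref{comb.def.}(ii): as $m\mid\lcm(D_T)\mid\lcm(D)$, some $x_{i_l}^{\alpha_{i_l}}$ divides $m$; comparing exponents in $\lcm(D_T)$ and $\lcm(D)$ forces the exponent of $x_{i_l}$ in $\lcm(D_T)$ to equal $\alpha_{i_l}$, which---by dominance of $d_l$ in $x_{i_l}$---can only happen when $d_l\in D_T$, i.e.\ $l\in T$. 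Granting this, Lemma~\ref{lemma 2} produces a Taylor symbol $[\sigma_T]$ in the minimal resolution of $S/M_T$ with $\hdeg[\sigma_T]=r$ and multidegree $\ell_T=\mdeg[\sigma_T]\mid\lcm(D_T)$ whose exponent of $x_{i_j}$ equals $\alpha_{i_j}$ exactly for $j\in T$ (and is strictly smaller for the remaining $x_{i_l}$, by the strict inequality just noted).

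Finally I would transfer these basis elements from $S/M_T$ to $S/M$ and count. The transfer rests on the locality of multigraded Betti numbers: since consecutive cancellations preserve multidegree, $\betti_{r,\ell}(S/M)$ depends only on the generators dividing $\ell$, and these coincide for $M$ and $M_T$ whenever $\ell\mid\lcm(D_T)$; applied to $\ell=\ell_T$ this gives $\betti_{r,\ell_T}(S/M)=\betti_{r,\ell_T}(S/M_T)\ge1$. Distinct subsets $T$ yield distinct $\ell_T$, because $T$ is recovered from $\ell_T$ as the set of $j$ for which $x_{i_j}$ attains exponent $\alpha_{i_j}$. Summing over all ${q\choose r}$ subsets then gives $\betti_r(S/M)\ge{q\choose r}$. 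I expect the main obstacle to be precisely the mismatch that forces the passage to sub-ideals: Lemma~\ref{lemma 2} is phrased in terms of $\lcm(G)$ and \emph{all} generators, while $\mathscr{I}$ only controls divisors of $\lcm(D)$; reconciling this through the sub-ideals $M_T$ requires both the delicate dominance argument that keeps hypothesis (ii) alive after restriction and a careful statement of Betti-number locality within the paper's cancellation framework.
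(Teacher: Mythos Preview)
Your proposal is correct and follows essentially the same route as the paper: for each $r$-subset $T$ of the indices you form the sub-ideal generated by $G_T=\{m\in G:m\mid\lcm(D_T)\}$, verify the hypotheses of Lemma~\ref{lemma 2} for $M_T$ via the same dominance argument (the paper's line ``otherwise, $m$ could not be divisible by any of $x_{i_1}^{\alpha_{i_1}},\ldots,x_{i_q}^{\alpha_{i_q}}$'' is exactly your observation that $l\notin T$ would force the $x_{i_l}$-exponent in $\lcm(D_T)$ below $\alpha_{i_l}$), extract a Taylor symbol with the prescribed multidegree pattern, and distinguish the ${q\choose r}$ symbols by the set of coordinates attaining exponent $\alpha_{i_j}$. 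The only visible difference is the transfer step: the paper invokes the [GHP] subresolution result to embed the minimal resolution of $S/M_T$ into that of $S/M$, whereas you invoke locality of multigraded Betti numbers in degrees dividing $\lcm(D_T)$; these are equivalent justifications of the same fact.
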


\begin{proof}
Let $G$ be the minimal generating set of $M$. By definition, there is a sequence $1\leq i_1< \cdots <i_q \leq n$, with $q\geq 1$, such that:
\begin{enumerate}[(i)]
\item $G$ contains a dominant set $\{d_1,\ldots,d_q\}$ such that, for all $j=1,\ldots,q$, $d_j$ is dominant in $x_{i_j}$, and $x_{i_j}$ appears with exponent $\alpha_{i_j}\geq 1$ in the factorization of $d_j$.
\item If $m \in G$, and $m\mid \lcm(d_1,\ldots,d_q)$, then $m$ is divisible by at least one of $x_{i_1}^{\alpha_{i_1}}, \ldots, x_{i_q}^{\alpha_{i_q}}$.
\end{enumerate} 
Let $i_{j_1},\ldots, i_{j_r}$ be a subsequence of $i_1,\ldots i_q$. Then $\lcm(d_{j_1},\ldots,d_{j_r})=x_1^{\beta_1}\cdots x_n^{\beta_n}$, where $\beta_k=\alpha_k$ if $k\in\{i_{j_1},\ldots,i_{j_r}\}$, and $\beta_k<\alpha_k$ if 
$k\in \{i_1,\ldots,i_q\}\setminus \{i_{j_1},\ldots,i_{j_r}\}$. Let $G'=\{m\in G: m\mid \lcm(d_{j_1},\ldots,d_{j_r})\}$. Notice that $\{d_{j_1},\ldots,d_{j_r}\}$ is a dominant set of cardinality $r$, such that each $d_{j_k}$ is dominant in $x_{i_{j_k}}$, and each $x_{i_{j_k}}$ appears with exponent $\alpha_{i_{j_k}}$ in the factorization of $d_{j_k}$. In addition, every $m\in G'$ is divisible by at least one of $x_{i_{j_1}}^{\alpha_{i_{j_1}}},\ldots,x_{i_{j_r}}^{\alpha_{i_{j_r}}}$ because, otherwise, $m$ could not be divisible by any of $x_{i_1}^{\alpha_{i_1}},\ldots, x_{i_q}^{\alpha_{i_q}}$, which contradicts property (ii).\\
Let $M'$ be the ideal (minimally) generated by $G'$, and let $\mathbb{F}_{M'}$ be a minimal resolution of $S/M'$. By Lemma \ref{lemma 2}, $\mathbb{F}_{M'}$ contains a Taylor symbol $[\sigma]$, such that $\hdeg[\sigma]=r$, and $\mdeg[\sigma]=x_1^{\lambda_1}\cdots x_n^{\lambda_n}$, where $\lambda_k=\beta_k$ if $k\in\{i_{j_1},\ldots,i_{j_r}\}$, and $\lambda_k\leq \beta_k$ if $k\notin \{i_{j_1},\ldots,i_{j_r}\}$. In particular $\lambda_k=\alpha_k$ if 
$k\in \{i_{j_1},\ldots, i_{j_r}\}$, and $\lambda_k< \alpha_k$ if $k\in \{i_1,\ldots,i_q\}\setminus \{i_{j_1},\ldots, i_{j_r}\}$. Since $\mathbb{F}_{M'}$ is a subresolution of the minimal resolution $\mathbb{F}_M$ of $S/M$ [GHP], we conclude that $\mathbb{F}_M$ contains a Taylor symbol $[\sigma]$, such that $\hdeg[\sigma]=r$, and $\mdeg[\sigma]=x_1^{\lambda_1} \cdots x_n^{\lambda_n}$, where $\lambda_k=\alpha_k$ if $k\in \{i_{j_1},\ldots,i_{j_r}\}$, and 
$\lambda_k< \alpha_k$, if $k\in \{i_1,\ldots,i_q\}\setminus \{i_{j_1},\ldots,i_{j_r}\}$.\\
By defining $[\sigma_{i_{j_1},\ldots,i_{j_r}}]=[\sigma]$, we can establish a correspondence between the set $A_r$ of all subsequences $i_{j_1},\ldots,i_{j_r}$ of $i_1,\ldots,i_q$, and the set $B_r$ of Taylor symbols of 
$\mathbb{F}_M$ in homological degree $r$. Let 
\begin{align*}
f:A_r & \rightarrow  B_r\\
\{i_{j_1} ,\ldots, i_{j_r}\} & \rightarrow  [\sigma_{i_{j_1},\ldots,i_{j_r}}]
\end{align*}
be the function that determines this correspondence. If $\{i_{j_1},\ldots,i_{j_r}\}$ and $\{i_{t_1},\ldots,i_{t_r}\}$ are different sequences of $A_r$, then there must be an index that belongs to one sequence but not to the other; say 
$\lambda_{s_k}\in \{i_{s_1},\ldots,i_{s_r}\}\setminus \{i_{t_1},\ldots,i_{t_r}\}$. Hence, $x_{i_{s_k}}^{\alpha_{i_{s_k}}}\mid \mdeg[\sigma_{i_{s_1},\ldots,i_{s_r}}]$, and $x_{i_{s_k}}^{\alpha_{i_{s_k}}}\nmid \mdeg[\sigma_{i_{t_1},\ldots,i_{t_r}}]$. This implies that $\mdeg[\sigma_{i_{s_1},\ldots,i_{s_r}}]\neq \mdeg[\sigma_{i_{t_1},\ldots,i_{t_r}}]$. In particular, $[\sigma_{i_{s_1},\ldots,i_{s_r}}]\neq [\sigma_{i_{t_1},\ldots,i_{t_r}}]$, and thus, $f$ is one-to-one. Finally,
\[\betti_r(S/M)=\# B_r \geq \# A_r={q\choose r}\]
and given that $r$ is arbitrary, the theorem holds.
\end{proof}

The second part of Theorem \ref{Theorem 3.4} is weaker than a result of Brum and R$\ddot{o}$mer [BR], asserting that $\betti_r(S/M)\geq {\pd(S/M)\choose r}$. Combining this last inequality with the first part of Theorem \ref{Theorem 3.4}, we obtain that $\betti_r(S/M)\geq {\pd(S/M)\choose r}\geq {q\choose r}$. However, the weaker inequality stated in Theorem \ref{Theorem 3.4} is useful in those instances where the order of dominance is known, but the projective dimension is unknown (cf. Example \ref{odom by nets}).

In Theorem \ref{nets 5}, we will give a list of five equivalent statements that characterize the class of monomial ideals $M$ for which $\pd(S/M)=n$. The next theorem is key to such characterization. 

\begin{theorem}\label{Theorem 3.5}
Let $M$ be minimally generated by $G$. Suppose that $G$ contains a dominant set $L$, of cardinality $n$, such that no monomial of $G$ strongly divides $\lcm(L)$. Then $\betti_r(S/M)\geq {n\choose r}$, for all $r$.
\end{theorem}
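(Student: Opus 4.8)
The plan is to reduce the statement to Theorem \ref{Theorem 3.4} by showing that the hypotheses force $\odom(S/M) = n$. First I would record the structural consequences of carrying a dominant set $L$ of cardinality $n$ inside $n$ variables: since no two monomials can be dominant in the same variable, the $n$ members of $L$ are dominant in $n$ distinct variables, and after relabeling I may assume $L = \{d_1, \ldots, d_n\}$ with $d_i$ dominant in $x_i$. Writing $\lcm(L) = x_1^{\alpha_1} \cdots x_n^{\alpha_n}$, dominance gives $\alpha_i \geq 1$ and says precisely that $d_i$ is the unique element of $L$ whose $x_i$-exponent equals $\alpha_i$, every other element of $L$ having $x_i$-exponent strictly below $\alpha_i$.

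The heart of the argument is to convert the strong-divisibility hypothesis into clause (ii) of Definition \ref{comb.def.}. Take $m \in G$ with $m \mid \lcm(L)$. Then every exponent of $m$ is at most the corresponding $\alpha_k$, while the failure of $m$ to strongly divide $\lcm(L)$ yields an index $k$ for which the $x_k$-exponent of $m$ is nonzero yet not smaller than $\alpha_k$; combined with the bound $\leq \alpha_k$ this forces the $x_k$-exponent of $m$ to equal $\alpha_k$, i.e. $x_k^{\alpha_k} \mid m$. Hence the sequence $1 < 2 < \cdots < n$ satisfies both clauses of Definition \ref{comb.def.} (clause (i) with $D = L$, clause (ii) by the computation just made), so it lies in $\mathscr{I}$; since trivially $\odom(S/M) \leq n$, this gives $\odom(S/M) = n$, and Theorem \ref{Theorem 3.4} yields $\betti_r(S/M) \geq \binom{n}{r}$ for every $r$, which is the claim.

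Should one prefer a self-contained argument, the bound can also be obtained by replaying the counting in the proof of Theorem \ref{Theorem 3.4} verbatim with $\{1, \ldots, n\}$ in place of $\{i_1, \ldots, i_q\}$. For each $r$-subset $T \subseteq \{1, \ldots, n\}$ I would pass to the ideal $M_T$ generated by $G_T = \{m \in G : m \mid \lcm(d_i : i \in T)\}$, apply Lemma \ref{lemma 2} to produce a Taylor symbol of homological degree $r$ whose multidegree carries the exact exponent $\alpha_i$ on each $x_i$ with $i \in T$ and a strictly smaller exponent on each $x_k$ with $k \notin T$, and then transport it to the minimal resolution of $S/M$ using that $\mathbb{F}_{M_T}$ is a subresolution of $\mathbb{F}_M$. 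Distinct subsets $T$ produce distinct multidegrees, since any $i \in T_1 \setminus T_2$ separates them through the exponent of $x_i$, giving $\binom{n}{r}$ distinct basis elements in homological degree $r$.

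The step needing the most care — and the only place the strong-divisibility hypothesis enters in this direct route — is verifying clause (ii) of Lemma \ref{lemma 2} for each $M_T$: one must check that every $m \in G_T$ is divisible by some $x_i^{\alpha_i}$ with the index $i$ lying inside $T$, not merely somewhere in $\{1, \ldots, n\}$. This holds because $m \mid \lcm(d_i : i \in T)$ already forces the exponent of $x_k$ in $m$ to be $< \alpha_k$ for every $k \notin T$ — each such $d_k$ does not appear among the $d_i$ with $i \in T$ and was the unique element of $L$ carrying the exponent $\alpha_k$ on $x_k$ — so the index produced by the strong-divisibility argument cannot fall outside $T$.
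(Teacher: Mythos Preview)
Your proposal is correct and follows the paper's own proof essentially verbatim: show that the dominant set $L=\{d_1,\ldots,d_n\}$ together with the no--strong--divisibility hypothesis witnesses $\odom(S/M)=n$, then invoke Theorem~\ref{Theorem 3.4}. Your write-up is in fact more careful than the paper's one-line deduction, since you spell out exactly how ``$m$ does not strongly divide $\lcm(L)$'' forces $x_k^{\alpha_k}\mid m$ for some $k$; the optional ``self-contained'' paragraphs merely unfold the proof of Theorem~\ref{Theorem 3.4} in the special case $q=n$, as you yourself note.
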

\begin{proof}
Let $L=\{d_1,\ldots,d_n\}$, where each $d_j$ is dominant in $x_j$. Let $\lcm(L)= x_1^{\alpha_1}\cdots x_n^{\alpha_n}$. Then, each $x_i$ appears with exponent $\alpha_i$ in the factorization of $d_i$. Since no minimal generator strongly divides $\lcm(L)$, every $m \in G$ must be divisible by at least one of  $x_1^{\alpha_1},\ldots, x_n^{\alpha_n}$. Thus, $\odom (S/M)=n$, and the result follows from Theorem \ref{Theorem 3.4}.
\end{proof}

Now we use the idea of order of dominance to compute projective dimensions. Recall that the Scarf complex [BPS] is a subcomplex of the Taylor resolution and, in general, it does not give a resolution of $S/M$, but when it does, it gives a minimal resolution of $S/M$. As shown in [BPS], the basis of the Scarf complex consists of each Taylor symbol whose multidegree is different from the multidegrees of the other Taylor symbols.

 Monomial ideals minimally resolved by the Scarf complex are called Scarf ideals (generic [Mi], strongly generic [BPS], dominant [Al1], and semidominant [Al1] ideals are examples of Scarf ideals). In the next theorem, we express the projective dimension of Scarf ideals in terms of their order of dominance, and in the next section, we use the order of dominance to characterize the Scarf ideals that are Cohen-Macaulay (Corollary \ref{C-M}).

\begin{theorem}\label{ScarfPD}
Let $M$ be a Scarf ideal. Then $\pd(S/M)=\odom(S/M)$.
\end{theorem}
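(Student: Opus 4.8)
The plan is to prove the two inequalities separately. One direction, $\pd(S/M)\geq \odom(S/M)$, holds for \emph{every} monomial ideal by Theorem \ref{Theorem 3.4}, so the real content is the reverse inequality $\pd(S/M)\leq \odom(S/M)$, and this is where I would use that $M$ is Scarf. I would set $p=\pd(S/M)$. Since the minimal resolution of $S/M$ is the Scarf complex, the nonvanishing of $\betti_p(S/M)$ means there is a basis element $[\sigma]=[d_1,\ldots,d_p]$ in homological degree $p$; that is, a $p$-element subset $D=\{d_1,\ldots,d_p\}$ of the minimal generating set $G$ whose multidegree $\lcm(D)$ is distinct from the multidegree of every other Taylor symbol. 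The goal is then to show that $D$, after a suitable relabeling of variables, witnesses a length-$p$ sequence in the class $\mathscr{I}$ of Definition \ref{comb.def.}, which gives $\odom(S/M)\geq p$.

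The key observation is that the defining property of a Scarf face, namely uniqueness of its lcm, forces $D$ to be dominant. Indeed, if some $d_j$ were not dominant in $D$, then for every variable $x$ there would be an index $k\neq j$ with $\deg_x(d_k)\geq \deg_x(d_j)$, whence $\lcm(D\setminus\{d_j\})=\lcm(D)$; this exhibits two distinct subsets of $G$ with the same lcm, contradicting the uniqueness attached to the Scarf symbol $[\sigma]$. Thus each $d_j$ is dominant in some variable $x_{i_j}$. Moreover, two distinct elements of $D$ cannot be dominant in the same variable, since strict maximality of an exponent cannot be attained simultaneously by two monomials, so the indices $i_1,\ldots,i_p$ are pairwise distinct; after reordering I may assume $i_1<\cdots<i_p$, with $d_j$ dominant in $x_{i_j}$ and therefore $\deg_{x_{i_j}}(d_j)=\alpha_{i_j}$, the exponent of $x_{i_j}$ in $\lcm(D)$.

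It then remains to verify condition (ii) of Definition \ref{comb.def.}: every $m\in G$ dividing $\lcm(D)$ must be divisible by one of $x_{i_1}^{\alpha_{i_1}},\ldots,x_{i_p}^{\alpha_{i_p}}$. Here again uniqueness of the lcm does the work. If $m\in G$ divides $\lcm(D)$ but $m\notin D$, then $\lcm(D\cup\{m\})=\lcm(D)$, once more producing two distinct subsets with a common lcm and contradicting that $[\sigma]$ is a Scarf symbol. Hence the only generators dividing $\lcm(D)$ are $d_1,\ldots,d_p$ themselves, and each $d_j$ is divisible by $x_{i_j}^{\alpha_{i_j}}$ by dominance. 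Thus the sequence $i_1<\cdots<i_p$ lies in $\mathscr{I}$, giving $\odom(S/M)\geq p=\pd(S/M)$, and combined with Theorem \ref{Theorem 3.4} I conclude $\pd(S/M)=\odom(S/M)$. The only point demanding care is the bookkeeping that ties the homological-degree-$p$ Scarf symbol to a genuine length-$p$ element of $\mathscr{I}$ (in particular the passage from dominant variables to a strictly increasing index sequence); once the two uniqueness arguments above are in place, everything else is routine, so I expect no serious obstacle beyond invoking the Scarf property correctly in both the dominance step and the verification of condition (ii).
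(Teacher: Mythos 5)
Your proposal is correct and follows essentially the same route as the paper: take a Scarf basis element in top homological degree $t=\pd(S/M)$, use uniqueness of its multidegree to show its underlying set of generators is dominant and that no other generator divides its lcm, and conclude $\odom(S/M)\geq t$ via Definition \ref{comb.def.}, finishing with Theorem \ref{Theorem 3.4}. Your explicit bookkeeping (distinct dominant variables, reindexing to an increasing sequence) is a detail the paper leaves implicit, but the argument is the same.
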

\begin{proof}
Let $M=(m_1, \ldots, m_q)$. The basis of the Scarf complex of $S/M$ is

\[\mathscr{F} = \{[\sigma] \in \mathbb{T}_M: \text{ if } [\tau]\in \mathbb{T}_M \text{ and }\mdeg[\sigma] = \mdeg[\tau]\text{, then }[\sigma] = [\tau]\}.\]
Therefore, $\pd(S/M) = \Max\{\hdeg[\sigma]:[\sigma]\in\mathscr{F}\}$. Denote $t=\pd(S/M)$. Then there is an element $[\sigma]\in\mathscr{F}$ of the form $[\sigma] = [m_{i_1},\ldots,m_{i_t}]$. We will show that the set $\{m_{i_1},\ldots,m_{i_t}\}$ is dominant. Suppose not. Then there is $1\leq r\leq t$ such that $m_{i_r}\mid \lcm(m_{i_1},\ldots,\widehat{m_{i_r}},\ldots,m_{i_t})$. It follows that $\mdeg[m_{i_1},\ldots,\widehat{m_{i_r}},\ldots,m_{i_t}] = \mdeg[m_{i_1},\ldots,m_{i_t}]$, a contradiction. Now, suppose that there is a minimal generator $m\in \{m_1,\ldots,m_q\} \setminus \{m_{i_1},\ldots,m_{i_t}\}$, such that $m\mid \lcm(m_{i_1},\ldots,m_{i_t})$. Then $\mdeg[m_{i_1},\ldots,m_{i_t}]=\mdeg[m_{i_1},\ldots,m_{i_t},m]$, another contradiction.

Summarizing, $\{m_{i_1},\ldots,m_{i_t}\}$ is a dominant set such that, if a minimal generator $m$ divides $\lcm(m_{i_1},\ldots,m_{i_t})$, then $m\in \{m_{i_1},\ldots,m_{i_t}\}$. It follows from Definition \ref{comb.def.} that $\odom(S/M)\geq t$. Finally, $t\leq\odom(S/M)\leq \pd(S/M) = t$.

\end{proof}

We close this section with a characterization of when the Taylor resolution is minimal.

\begin{theorem}\label{Taylor}
Let $M=(m_1,\ldots,m_q)$ be a monomial ideal. Then $\mathbb{T}_M$ is a minimal resolution of $S/M$ if and only if $\odom (S/M)=q$. 
\end{theorem}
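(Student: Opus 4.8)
The plan is to prove both implications by reducing each side to the single combinatorial statement that the minimal generating set $G=\{m_1,\ldots,m_q\}$ of $M$ is a dominant set. The bridge is the differential of $\mathbb{T}_M$: for a Taylor symbol $[\sigma]=[m_{i_1},\ldots,m_{i_s}]$ and the facet $[\tau]$ obtained by deleting $m_{i_j}$, the corresponding entry is $\pm\lcm(\sigma)/\lcm(\tau)$, which is a unit exactly when $m_{i_j}\mid\lcm(m_{i_1},\ldots,\widehat{m_{i_j}},\ldots,m_{i_s})$. Hence $\mathbb{T}_M$ is minimal if and only if, in every subset, no generator divides the lcm of the remaining ones, i.e.\ every subset of $G$ is a dominant set. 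Since the elements of a dominant set are dominant in pairwise distinct variables, any subset of a dominant set is again dominant, so this condition collapses to: $G$ itself is dominant. I would also record the free bound $\odom(S/M)\le q$, since every sequence in $\mathscr{I}$ witnesses a dominant set $D\subseteq G$ with $\#D\le\#G=q$.

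For the implication $\odom(S/M)=q\Rightarrow\mathbb{T}_M$ minimal I would lean on Theorem \ref{Theorem 3.4}, which gives $\betti_r(S/M)\ge\binom{q}{r}$ for all $r$. On the other hand $\mathbb{T}_M$ is a free resolution with $\rank F_r=\binom{q}{r}$, and since the minimal resolution is obtained from $\mathbb{T}_M$ by consecutive cancellations (each of which strictly lowers ranks), $\betti_r(S/M)\le\binom{q}{r}$. Thus $\betti_r(S/M)=\binom{q}{r}=\rank F_r$ for every $r$, so no cancellation can occur and $\mathbb{T}_M$ is already minimal. Alternatively, and without invoking Betti numbers, $\odom(S/M)=q$ forces the witnessing dominant set $D\subseteq G$ to satisfy $\#D=q=\#G$, hence $D=G$ is dominant, and the bridge above gives minimality directly.

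For the converse, suppose $\mathbb{T}_M$ is minimal. Applying the bridge to the full symbol $[m_1,\ldots,m_q]$ and its facets shows $m_i\nmid\lcm(m_1,\ldots,\widehat{m_i},\ldots,m_q)$ for each $i$, so $G$ is dominant: each $m_j$ is dominant in some variable $x_{i_j}$, and these variables are distinct. Writing $\lcm(G)=x_1^{\alpha_1}\cdots x_n^{\alpha_n}$, dominance of $m_j$ in $x_{i_j}$ means $x_{i_j}^{\alpha_{i_j}}\mid m_j$. I would then verify that the reordered sequence $i_1<\cdots<i_q$ lies in $\mathscr{I}$: condition (i) holds with $D=G$, and condition (ii) holds because any $m\in G$ dividing $\lcm(D)=\lcm(G)$ is some $m_j$, which is divisible by $x_{i_j}^{\alpha_{i_j}}$. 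Hence $\odom(S/M)\ge q$, and combined with $\odom(S/M)\le q$ this yields $\odom(S/M)=q$.

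The differential-entry computation and the checking of conditions (i)--(ii) are routine. The one step needing care is the rank-counting in the first implication: I must ensure $\betti_r(S/M)\le\binom{q}{r}$ is legitimate, namely that passing from $\mathbb{T}_M$ to the minimal resolution only deletes basis elements in matched pairs, and that equality of all ranks genuinely forbids any cancellation, so that \emph{minimality} follows rather than merely $\pd(S/M)=q$. This is precisely where the conventions set up in the Note after the definition of minimal resolution do the real work; anyone preferring to avoid this bookkeeping can instead use the parenthetical argument via $D=G$, which reaches the same conclusion.
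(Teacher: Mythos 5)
Your proof is correct, and its skeleton coincides with the paper's: both reduce minimality of $\mathbb{T}_M$ to the statement that $G=\{m_1,\ldots,m_q\}$ is a dominant set, and then identify dominance of $G$ with $\odom(S/M)=q$. The difference lies in what is proved versus what is cited. The paper's entire proof is an appeal to [Al1, Theorem 4.4] ($\mathbb{T}_M$ is minimal iff $G$ is dominant), followed by the assertion, given without further argument, that this is equivalent to $\odom(S/M)=q$. You prove both links. For the first, you use the unit-entry criterion for the Taylor differential: the facet entry $\pm\lcm(\sigma)/\lcm(\tau)$ is a unit exactly when the deleted generator divides the lcm of the rest, so minimality says every subset of $G$ is dominant, which collapses to ``$G$ is dominant'' because dominance passes to subsets. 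For the second, you verify conditions (i) and (ii) of Definition \ref{comb.def.} for $D=G$ (correctly noting that dominance of $m_j$ in $x_{i_j}$ forces its $x_{i_j}$-exponent to equal that of $\lcm(G)$, and that the dominant variables of distinct generators are distinct), and you record the bound $\odom(S/M)\le q$. This makes your argument self-contained where the paper leans on an external reference. One stylistic remark: your primary route for the direction $\odom(S/M)=q\Rightarrow$ minimality --- Theorem \ref{Theorem 3.4} plus the rank count $\betti_r(S/M)\le\binom{q}{r}$ forcing the cancellation sequence to be empty --- is valid under the paper's conventions but is heavier machinery than needed; the parenthetical argument you offer ($\#D=q=\#G$ forces $D=G$, hence $G$ is dominant) is exactly the reasoning implicit in the paper and closes that direction in one line.
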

\begin{proof}
By [Al1, Theorem 4.4], $\mathbb{T}_M$ is minimal if and only if $\{m_1,\ldots,m_q \}$ is a dominant set, which is equivalent to saying that $\odom (S/M)=q$.
\end{proof}

\section{Nets}

\begin{definition}
Let $M$ be minimally generated by $G$. Let $X=\{x_{i_1},\ldots,x_{i_q}\}$ be a set of variables of $S$. We will say that $X$ is a \textbf{net} of $M$ if every monomial of $G$ is divisible by at least one element of $X$. We will say that $X$ is a \textbf{minimal net} of $M$, if $X$ itself is a net of $M$, but no proper subset of $X$ is a net of $M$.
\end{definition}

\begin{example}
Let $M=(a^2e, b^3f, ce^2, d^2f^3)$. Let $X_1=\{a,b,c,d\}$; $X_2=\{e,f\}$; $X_3=\{a,c,f\}$; $X_4=\{b,d,c\}$; $X_5=\{d,e,f\}$; $X_6=\{b,d,e,f\}$. Note that $X_1$, $X_2$, $X_3$, and $X_4$ are minimal nets of $M$ (and there are no other minimal nets of $M$), while $X_5$ and $X_6$ are nets (but not minimal nets) of $M$.
\end{example} 

\begin{theorem}\label{nets 1}
Let $M$ be minimally generated by $G$. Let $X=\{x_{i_1},\ldots,x_{i_q}\}$ be a minimal net of $M$.Then, $G$ contains a dominant set $D=\{d_1,\ldots,d_q\}$ with the following properties:
\begin{enumerate}[(i)]
\item For all $j=1,\ldots, q$, $d_j$ is dominant in $x_{i_j}$.
\item  If $\lcm(D)=x_1^{\alpha_1}\cdots x_n^{\alpha_n}$, and $m$ is an element of $G$ dividing $\lcm(D)$, then $m$ is divisible by one of $x_{i_1}^{\alpha_{i_1}},\ldots,x_{i_q}^{\alpha_{i_q}}$.
\end{enumerate}
\end{theorem}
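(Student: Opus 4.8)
The plan is to produce $D$ by an extremal argument. Call a set $D=\{d_1,\dots,d_q\}\subseteq G$ \emph{admissible} if it is dominant and $d_j$ is dominant in $x_{i_j}$ for each $j$; write $e_v(m)$ for the exponent of a variable $v$ in a monomial $m$. First I would check that admissible sets exist. Since $X$ is a \emph{minimal} net, for each $j$ the set $X\setminus\{x_{i_j}\}$ fails to be a net, so there is a generator $p_j\in G$ divisible by $x_{i_j}$ and by no other variable of $X$. For $k\ne j$ we have $e_{x_{i_j}}(p_k)=0<e_{x_{i_j}}(p_j)$, so each $p_j$ is dominant in $x_{i_j}$, the $p_j$ are distinct, and $\{p_1,\dots,p_q\}$ is admissible. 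Now fix a total order on monomials refining divisibility (for instance the lexicographic order on exponent vectors), and among the finitely many admissible sets choose one, $D=\{d_1,\dots,d_q\}$, minimizing $\lcm(D)$ in this order.

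Write $\lcm(D)=x_1^{\alpha_1}\cdots x_n^{\alpha_n}$. Since $d_j$ is dominant in $x_{i_j}$, the exponent $\alpha_{i_j}$ equals $e_{x_{i_j}}(d_j)$, and every other $d_k$ has $x_{i_j}$-exponent strictly below $\alpha_{i_j}$. The role of the order is an exchange principle: if $m\in G$ divides $\lcm(D)$, then the set obtained by replacing a single $d_j$ with $m$ has lcm dividing $\lcm(D)$ (because $m$ and every $d_k$ divide $\lcm(D)$), hence is no larger in the order; and it is \emph{strictly} smaller whenever $e_{x_{i_j}}(m)<\alpha_{i_j}$, since then the $x_{i_j}$-exponents of $m$ and of all remaining $d_k$ are $<\alpha_{i_j}$, so that coordinate of the lcm drops.

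I would then prove (ii) by contradiction. Suppose some $m\in G$ divides $\lcm(D)$ but is divisible by none of $x_{i_1}^{\alpha_{i_1}},\dots,x_{i_q}^{\alpha_{i_q}}$, i.e. $m$ is \emph{bad}: $e_{x_{i_j}}(m)<\alpha_{i_j}$ for every $j$. Because $m$ is bad, each remaining $d_k$ still has $e_{x_{i_k}}(d_k)=\alpha_{i_k}>e_{x_{i_k}}(m)$ and exceeds every other $d_l$ there, so $d_k$ stays dominant in $x_{i_k}$ after removing any single $d_j$ and inserting $m$. Thus, to contradict minimality, it suffices to find one index $j$ with $x_{i_j}\mid m$ for which $m$ is dominant in $x_{i_j}$ in the new set, that is $e_{x_{i_j}}(m)>e_{x_{i_j}}(d_k)$ for all $k\ne j$; the exchange then produces an admissible set with strictly smaller lcm, the desired contradiction. (Note $X$ being a net guarantees $x_{i_j}\mid m$ for at least one $j$.)

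The main obstacle is exactly the existence of such an index $j$. A bad generator need not be dominant in any single net variable: it may tie or be beaten in every net variable it uses, with the ``excess'' carried by variables outside the net (as happens for ideals like $(x^4,\,y^4,\,x^3yu,\,x^2y^3w,\,x^2yuw)$). Moreover the examples $(x^3,y^3,xy)$ and $(x^2z,x^2y,y^3)$ show that the correct admissible set is genuinely adaptive — sometimes a high (``private'') threshold $\alpha_{i_j}$ is forced, sometimes a low one — so no fixed recipe for choosing $D$ can work, which is precisely why the extremal selection is needed. Closing the gap requires ruling out the non-exchangeable bad case for a minimal $D$: one must show that a bad $m$ admitting no dominant net variable would have to be comparable under divisibility to some $d_k$ (impossible for distinct minimal generators) or would carry variable exponents that a strictly smaller admissible lcm avoids, again contradicting minimality. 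Turning this into a clean implication — equivalently, proving that a minimal admissible set has no bad generator dividing its lcm, with minimal generation doing the decisive work — is the heart of the proof.
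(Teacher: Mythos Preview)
Your extremal strategy cannot be completed: there are instances where the admissible set with divisibility-minimal $\lcm$ genuinely fails condition (ii), so no argument about minimal generation or comparability will rescue it. Take $S=k[x,y,z]$, $G=\{x^3,y^3,z^3,x^2y,y^2z,z^2x,xyz\}$, and the minimal net $X=\{x,y,z\}$. The set $D_0=\{x^2y,y^2z,z^2x\}$ is admissible with $\lcm(D_0)=x^2y^2z^2$, and no admissible set has $\lcm$ strictly dividing $x^2y^2z^2$ (the only generators dividing $x^2y^2z^2$ are $x^2y,y^2z,z^2x,xyz$, and any triple among them containing $xyz$ has a tie in some net variable). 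Thus under, say, the lexicographic order your procedure selects $D_0$. But $m=xyz$ divides $\lcm(D_0)$, is bad (each net exponent is $1<2$), is incomparable under divisibility to every $d_k$, and every single exchange produces a tie. Both of your suggested rescues fail here.

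The paper does not minimize the $\lcm$; it runs a sequential greedy construction. One processes the net variables in order: $G_1$ is the set of generators divisible by $x_{i_1}$ and by no other net variable, and $d_1\in G_1$ is chosen with \emph{smallest} $x_{i_1}$-exponent $\epsilon_1$; then $G_2$ is the set of generators divisible by $x_{i_2}$ but by none of $x_{i_1}^{\epsilon_1},x_{i_3},\ldots,x_{i_q}$, and $d_2\in G_2$ has smallest $x_{i_2}$-exponent $\epsilon_2$; and so on. Choosing the minimum exponent at each stage forces any $m\in G$ with $m\mid\lcm(D)$ that has escaped $x_{i_1}^{\epsilon_1},\ldots,x_{i_{k-1}}^{\epsilon_{k-1}}$ and is not divisible by a later net variable to lie in $G_k$, hence to be divisible by $x_{i_k}^{\epsilon_k}$. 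In the example this yields $D=\{x^3,x^2y,z^2x\}$ with $\lcm(D)=x^3yz^2$, and $xyz$ is caught by the threshold $y^1$. The point you are missing is that the correct $D$ may have \emph{larger} $\lcm$ than other admissible sets; what matters is making each threshold $\alpha_{i_j}$ small enough to trap all relevant generators, and this is achieved by the stagewise minimum, not by minimizing the overall $\lcm$.
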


\begin{proof}
By definition, every monomial of $G$ is divisible by some variable of $X$. In addition, if $1\leq j \leq q$, since $X$ is a minimal net, not every monomial of $G$ is divisible by a variable of 
$X\setminus \{x_{i_j}\}$. Therefore, there exists $m_j \in G$ such that $m_j$ is divisible by $x_{i_j}$, but $m_j$ is not divisible by any monomial of $X\setminus \{x_{i_j}\}$. Thus, there is a dominant set 
$\{m_1,\ldots,m_q\} \subseteq G$ such that, for all $j=1,\ldots, q$, $m_j$ is divisible by $x_{i_j}$, but is not divisible by any variable of $X\setminus\{x_{i_j}\}$.\\
Let $G_1=\{m\in G:x_{i_1} \mid m$ but $m$ is not divisible by any of $x_{i_2},\ldots, x_{i_q}\}$. (Notice that $G_1\neq \varnothing$, for $m_1 \in G_1$.) Let $\epsilon_1 \geq 1$ be the smallest exponent with which $x_{i_1}$ appears in the factorization of an element of $G_1$, and let $d_1\in G_1$ be such that $x_{i_1}$ appears with exponent $\epsilon_1$ in the factorization of $d_1$.
Let $G_2=\{m\in G: x_{i_2}\mid m$ but $m$ is not divisible by any of $x_{i_1}^{\epsilon_1},x_{i_3},\ldots,x_{i_q}\}$ ($G_2\neq \varnothing$, for $m_2\in G_2$.)\\
Let $\epsilon_2\geq 1$ be the smallest exponent with which $x_{i_2}$ appears in the factorization of an element of $G_2$, and let $d_2\in G_2$ be such that the exponent with which $x_{i_2}$ appears in the factorization of $d_2$ is $\epsilon_2$.\\
Suppose that $G_{k-1}$ and $d_{k-1}$ have been defined. Let $G_k=\{m\in G: x_{i_k}\mid m$, but $m$ is not divisible by any of $x_{i_1}^{\epsilon_1},\ldots, x_{i_{k-1}}^{\epsilon_{k-1}},x_{k+1}^{\epsilon_{k+1}},\ldots,x_{i_q}\}$. 
(Notice that $G_k\neq \varnothing$, for $m_k\in G_k$.) \\
Let $\epsilon_k$ be the smallest exponent with which $x_{i_k}$ appears in the factorization of an element of $G_k$. Let $d_k\in G_k$ be such that $x_{i_k}$ appears with exponent $\epsilon_k$ in the factorization of $d_k$.\\
By recurrence, we have constructed a set $D=\{d_1,\ldots,d_q\}$, with the following properties:
\begin{enumerate}[(i)]
\item $D$ is dominant; each $d_j$ is dominant in $x_{i_j}$; and $x_{i_j}$ appears with exponent $\epsilon_j$ in the factorization of $d_j$. In fact, by construction, $x_{i_j}$ appears with exponent $\epsilon_j$ in the factorization of $d_j$; $d_1,\ldots,d_{j-1}$ are not divisible by $x_{i_j}$; and $d_{j+1},\ldots,d_q$ are not divisible by $x_{i_j}^{\epsilon_j}$.
\item Every $m\in G$ dividing $\lcm(D)$, is divisible by one of $x_{i_1}^{\epsilon_1},\ldots,x_{i_q}^{\epsilon_q}$. In fact, if $m\in G$, and $m\mid \lcm(D)$, there is a variable of $X$ that divide $m$. Let $j$ be the largest number in $\{1,\ldots,q\}$ such that $x_{i_j}$ divides $m$. Then, either one of the $x_{i_1}^{\epsilon_1},\ldots,x_{i_{j-1}}^{\epsilon_{j-1}}$ divides $m$, or $m\in G_j$, in which case $x_{i_j}^{\epsilon_j}\mid m$. 
\end{enumerate}
\end{proof}

\begin{corollary} \label{nets 1'}
If $X$ is a minimal net of $M$, then $\odom(S/M)\geq \# X$. 
\end{corollary}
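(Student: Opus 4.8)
The plan is to read this corollary as a direct translation between Theorem \ref{nets 1} and Definition \ref{comb.def.}. Since $\odom(S/M)$ is by definition the largest cardinality of a sequence lying in the class $\mathscr{I}$, to prove $\odom(S/M) \geq \#X$ it suffices to exhibit a single sequence in $\mathscr{I}$ whose length equals $\#X$. The candidate is essentially forced on us: it is the sequence of indices of the variables making up the minimal net $X$ itself.

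Concretely, I would first write $X = \{x_{i_1}, \ldots, x_{i_q}\}$ with the indices arranged so that $1 \leq i_1 < \cdots < i_q \leq n$ (possible since $X$ is a set), so that $q = \#X$. Then I would invoke Theorem \ref{nets 1}: because $X$ is a minimal net, $G$ contains a dominant set $D = \{d_1, \ldots, d_q\}$ such that each $d_j$ is dominant in $x_{i_j}$, and such that every $m \in G$ dividing $\lcm(D) = x_1^{\alpha_1} \cdots x_n^{\alpha_n}$ is divisible by one of $x_{i_1}^{\alpha_{i_1}}, \ldots, x_{i_q}^{\alpha_{i_q}}$.

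The final step is simply to observe that these are verbatim the two defining conditions (i) and (ii) that a sequence $i_1 < \cdots < i_q$ must satisfy in order to belong to $\mathscr{I}$ according to Definition \ref{comb.def.}. Hence the sequence $i_1 < \cdots < i_q$ lies in $\mathscr{I}$, and therefore $\odom(S/M)$, being the maximum of the cardinalities of the sequences in $\mathscr{I}$, is at least $q = \#X$.

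I do not expect any genuine obstacle here: all the combinatorial work---extracting a suitable dominant set from a minimal net and verifying the $\lcm$-divisibility property---has already been carried out in the proof of Theorem \ref{nets 1}. The only thing to take care of is the purely cosmetic point of listing the elements of $X$ in increasing order of index, so that the set literally matches the "sequence" format demanded by Definition \ref{comb.def.}; once that is done, the corollary is just an unwinding of the two definitions.
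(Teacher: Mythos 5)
Your proof is correct and is essentially identical to the paper's: the paper likewise applies Theorem \ref{nets 1} to the minimal net $X$ to obtain the dominant set $D$ with properties (i) and (ii), and then concludes $\odom(S/M)\geq q=\#X$ directly from Definition \ref{comb.def.}. Your only addition is spelling out explicitly that the indices of $X$ can be listed in increasing order so as to form a sequence in $\mathscr{I}$, which the paper leaves implicit.
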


\begin{proof}
Let $X=\{x_{i_1},\ldots,x_{i_q}\}$. By Theorem \ref{nets 1}, the minimal generating set $G$ of $M$ contains a dominant set $D=\{d_1,\ldots,d_q\}$ with the following properties:
\begin{enumerate}[(i)]
\item For all $j=1,\ldots, q$, $d_j$ is dominant in $x_{i_j}$.
\item  If $\lcm(D)=x_1^{\alpha_1}\cdots x_n^{\alpha_n}$, and $m$ is an element of $G$ dividing $\lcm(D)$, then $m$ is divisible by one of $x_{i_1}^{\alpha_{i_1}},\ldots,x_{i_q}^{\alpha_{i_q}}$.
\end{enumerate}
By definition of order of dominance, $\odom(S/M)\geq \ q= \# X$.
\end{proof}

\begin{theorem}\label{nets 2}
If $X$ is a minimal net of $M_{\pol}$, then $\odom(S/M)\geq \# X$.
\end{theorem}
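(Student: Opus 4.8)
The plan is to transfer the combinatorial data carried by the minimal net $X$ of $M_{\pol}$ back to the original ideal $M$, and then to run the minimal-exponent construction from the proof of Theorem \ref{nets 1} directly inside the minimal generating set $G$ of $M$. It is tempting to simply apply Corollary \ref{nets 1'} to $M_{\pol}$, but that only yields $\odom(S_{\pol}/M_{\pol})\geq \#X$, and the order of dominance is not transparently preserved under polarization: a direct attempt to pull a witnessing dominant set of $G_{\pol}$ back to $G$ can break condition (ii) of Definition \ref{comb.def.} exactly when $X$ uses a ``low'' copy $x_{i,j}$ with $j$ smaller than the top exponent of $x_i$. Recognizing that one must rebuild the thresholds inside $M$, rather than transfer a dominant set, is the main obstacle; once this is done, everything parallels Theorem \ref{nets 1}.

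First I would record two facts about $X=\{x_{i_1,j_1},\ldots,x_{i_q,j_q}\}$. (1) The first indices $i_1,\ldots,i_q$ are pairwise distinct: if $x_{i,a},x_{i,b}\in X$ with $a<b$, then every generator of $M_{\pol}$ divisible by $x_{i,b}$ is also divisible by $x_{i,a}$, so $x_{i,b}$ would be redundant, contradicting minimality of $X$. (2) Pulling back along polarization, for each $m\in G$ we have $x_{i_k,j_k}\mid m'$ if and only if $x_{i_k}$ appears in $m$ with exponent at least $j_k$. Hence the net property of $X$ reads: every $m\in G$ satisfies $\deg_{x_{i_k}}m\geq j_k$ for some $k$; and minimality furnishes, for each $k$, a generator $m_k^{*}\in G$ with $\deg_{x_{i_k}}m_k^{*}\geq j_k$ while $\deg_{x_{i_l}}m_k^{*}<j_l$ for all $l\neq k$.

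With these thresholds in hand, I would mimic the construction in the proof of Theorem \ref{nets 1}, replacing each condition ``$x_{i_l}\mid m$'' by ``$\deg_{x_{i_l}}m\geq j_l$''. Concretely, set
\[
G_k=\{\,m\in G:\deg_{x_{i_k}}m\geq j_k,\ \deg_{x_{i_l}}m<\epsilon_l\ (l<k),\ \deg_{x_{i_l}}m<j_l\ (l>k)\,\},
\]
let $\epsilon_k\geq j_k$ be the least exponent of $x_{i_k}$ occurring among the elements of $G_k$ (which is nonempty, since $m_k^{*}\in G_k$), and choose $d_k\in G_k$ attaining it. Exactly as in Theorem \ref{nets 1}, the set $D=\{d_1,\ldots,d_q\}$ is dominant, each $d_k$ is dominant in $x_{i_k}$, and $x_{i_k}$ occurs in $d_k$ with exponent $\epsilon_k$, so $\lcm(D)$ has $x_{i_k}$-exponent equal to $\epsilon_k$.

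Finally I would verify condition (ii) of Definition \ref{comb.def.} for $D$. Given $m\in G$, the net property gives some index with $\deg_{x_{i_k}}m\geq j_k$; let $k$ be the largest such index. If $\deg_{x_{i_l}}m\geq\epsilon_l$ for some $l<k$, then $x_{i_l}^{\epsilon_l}\mid m$ and we are done; otherwise $\deg_{x_{i_l}}m<\epsilon_l$ for all $l<k$ and, by maximality of $k$, $\deg_{x_{i_l}}m<j_l$ for all $l>k$, so $m\in G_k$, whence $\deg_{x_{i_k}}m\geq\epsilon_k$ and $x_{i_k}^{\epsilon_k}\mid m$. Thus every $m\in G$, in particular every $m$ dividing $\lcm(D)$, is divisible by one of $x_{i_1}^{\epsilon_1},\ldots,x_{i_q}^{\epsilon_q}$. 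After reordering the distinct indices increasingly, $D$ exhibits a sequence of $\mathscr{I}$ of length $q$, and Definition \ref{comb.def.} yields $\odom(S/M)\geq q=\#X$.
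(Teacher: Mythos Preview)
Your proof is correct and follows essentially the same strategy as the paper: both observe that the first indices in $X$ are distinct, translate the net condition on $M_{\pol}$ into exponent thresholds $j_k$ in $M$, and then run the minimum-exponent recursion of Theorem~\ref{nets 1} with those thresholds in place of bare divisibility. The paper's execution is slightly less direct---it first applies Theorem~\ref{nets 1} to $M_{\pol}$, depolarizes to an intermediate dominant set $L\subseteq G$, and then runs a second minimum-exponent recursion restricted to divisors of $\lcm(L)$---whereas you carry out the whole construction in a single pass inside $G$.
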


\begin{proof}
Let $S_{\pol}=k[x_{1,1},\ldots,x_{1,\beta_1},\ldots,x_{n,1},\ldots,x_{n,\beta_n}]$. Suppose that $X$ contains two variables of the form $x_{i,r}$, $x_{i,s}$, where $r>s$. Then every monomial of $G_{\pol}$ that is divisible by $x_{i,r}$ is also divisible by $x_{i,s}$ and thus, $X\setminus\{x_{i,r}\}$ is also a net of $M_{\pol}$, which contradicts the minimality of $X$. Hence, $X$ must be of the form $X=\{x_{i_1,r_1},\ldots,x_{i_q,r_q}\}$, where 
$1\leq i_1<\ldots<i_q\leq n$, and $r_1,\ldots,r_q\geq 1$. By Theorem \ref{nets 1}, there is a dominant subset $L_{\pol}=\{l'_1,\ldots,l'_q\}$ of $G_{\pol}$ such that, if $\lcm(L_{\pol})=x_{1,1}\ldots x_{1,\alpha_1}\ldots x_{n,1}\ldots x_{n,\alpha_n}$, the following properties hold:
\begin{enumerate}[(i)]
\item Each $l'_j$ is dominant in $x_{i_j,r_j}$, and $x_{i_j,r_j}$ appears with exponent $1$ in the factorization of $l'_j$.
\item If $m' \in G_{\pol}$ and $m' \mid \lcm(L_{\pol})$, then $m'$ is divisible by at least one of $x_{i_1,r_1},\ldots,x_{i_q,r_q}$.
\end{enumerate}
Let $L=\{l_1,\ldots,l_q\}$ be the subset of $G$ obtained from $L_{\pol}$ by depolarizing. The fact that $L_{\pol}$ is dominant, with $l'_j$ dominant in $x_{i_j,r_j}$, means that for all $k\neq j$, $x_{i_j,r_j}\nmid l'_k$. It follows that 
$x_{i_j}^{r_j}\nmid l_k$, for all $k\neq j$. Then $x_{i_j}^{\alpha_j}\nmid l_k$, for all $k\neq j$. Therefore, $L=\{l_1,\ldots,l_q\}$ is a dominant set, such that
\begin{enumerate}[(i)]
\item Each $l_j$ is dominant in $x_{i_j}$, and $x_{i_j}$ appears with exponent $\alpha_j$ $(\geq r_j)$ in the factorization of $l_j$.
\item If $m\in G$ and $m\mid \lcm(L)$, then $m$ is divisible by at least one of $x_{i_1}^{r_1},\ldots,x_{i_q}^{r_q}$.
\end{enumerate}
Let $G_1=\{m\in G: m\mid \lcm(L); x_{i_1}^{r_1}\mid m$; and $m$ is not divisible by any of $x_{i_2}^{r_2},\ldots,x_{i_q}^{r_q}\}$. ($G_1\neq \varnothing$, for $l_1\in G_1$.) Let $t_1$ be the largest integer such that every $m\in G_1$ is divisible by $x_{i_1}^{t_1}$. Let $d_1\in G_1$ be such that $x_{i_1}$ appears with exponent $t_1$ in the factorization of $d_1$.\\
Let $G_2=\{m\in G: m\mid \lcm(L); x_{i_2}^{r_2}\mid m$; and $m$ is not divisible by any of $x_{i_1}^{t_1},x_{i_3}^{r_3},\ldots,x_{i_q}^{r_q}\}$. ($G_2\neq \varnothing$, for $l_2\in G_2$.) Let $t_2$ be the largest integer such that every $m\in G_2$ is divisible by $x_{i_1}^{t_2}$. Let $d_2\in G_2$ be such that $x_{i_2}$ appears with exponent $t_2$ in the factorization of $d_2$.\\
Suppose that $G_{k-1}$, $t_{k-1}$, and $d_{k-1}$ have been defined.\\
Let $G_k=\{m\in G: m\mid \lcm(L); x_{i_k}^{r_k}\mid m$; and $m$ is not divisible by any of $x_{i_1}^{t_1},\ldots,x_{i_{k-1}}^{t_{k-1}},x_{i_{k+1}}^{r_{k+1}},$\\
$\ldots,x_{i_q}^{r_q}\}$. ($G_k\neq \varnothing$, for $l_k\in G_k$.) Let $t_k$ be the largest integer such that every $m\in G_k$ is divisible by $x_{i_k}^{t_k}$. Let $d_k\in G_k$ be such that $x_{i_k}$ appears with exponent $t_k$ in the factorization of $d_k$.\\
Thus, by recurrence, we can define a dominant subset $D=\{d_1,\ldots,d_q\}$ of $G$, such that 
\begin{enumerate}[(i)]
\item each $d_j$ is dominant in $x_{i_j}$, and $x_{i_j}$ appears with exponent $t_j$ in the factorization of $d_j$. We claim that $D$ also has the following property:
\item if $m\in G$ and $m\mid \lcm(D)$, then $m$ is divisible by at least one of $x_{i_1}^{t_1},\ldots, x_{i_q}^{t_q}$.
\end{enumerate}
In fact, if $m \in G$ and $m\mid \lcm(D)$, then $m\mid \lcm(L)$ (by construction, each $d_j\in D$ divides $\lcm(L)$). Then $m$ must be divisible by one of $x_{i_1}^{r_1},\ldots,x_{i_q}^{r_q}$. Let $k$ be the largest number among 
$1,\ldots,q$, such that $x_{i_k}^{r_k}\mid m$. If $m$ is not divisible by one of $x_{i_1}^{t_1},\ldots, x_{i_{k-1}}^{t_{k-1}}$, then $m\in G_k$. Thus, $x_{i_k}^{t_k}\mid m$, and (ii) holds. By Theorem \ref{Theorem 3.4}, $\odom(S/M)\geq q=\# X$.
\end{proof}

\begin{theorem}\label{nets 3}
If $\odom(S/M)=q$, then there is a minimal net $X$ of $M_{\pol}$ such that $\# X=q$.
\end{theorem}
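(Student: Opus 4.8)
The plan is to start from a witness for $\odom(S/M)=q$ and manufacture from it an explicit minimal net of $M_{\pol}$. By Definition \ref{comb.def.} there is a sequence $1\le i_1<\cdots<i_q\le n$ and a dominant set $D=\{d_1,\ldots,d_q\}\subseteq G$ with each $d_j$ dominant in $x_{i_j}$, such that, writing $\lcm(D)=x_1^{\alpha_1}\cdots x_n^{\alpha_n}$, every $m\in G$ dividing $\lcm(D)$ is divisible by at least one of $x_{i_1}^{\alpha_{i_1}},\ldots,x_{i_q}^{\alpha_{i_q}}$. The candidate is the set of polarized variables
\[Y=\{x_{i_1,\alpha_{i_1}},\ldots,x_{i_q,\alpha_{i_q}}\}\subseteq S_{\pol}.\]
First I would record a ``distinct representatives'' property: since $d_j$ is dominant in $x_{i_j}$ we have $e_{i_j}(d_j)=\alpha_{i_j}$ while $e_{i_j}(d_k)<\alpha_{i_j}$ for $k\ne j$ (here $e_t(\cdot)$ denotes the exponent of $x_t$). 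Passing to polarizations, this says $x_{i_j,\alpha_{i_j}}\mid d_j'$ and $x_{i_j,\alpha_{i_j}}\nmid d_k'$ for $k\ne j$; in other words each variable of $Y$ covers exactly one of $d_1',\ldots,d_q'$, and covers it alone.

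Next I would enlarge $Y$ to a genuine net without disturbing this property. If some minimal generator $m'\in G_{\pol}$ is not divisible by any variable of $Y$, then $m$ is not divisible by any $x_{i_j}^{\alpha_{i_j}}$, so the covering hypothesis on $D$ forces $m\nmid\lcm(D)$; hence there is an index $t$ with $e_t(m)>\alpha_t$. The polarized variable $x_{t,\alpha_t+1}$ then divides $m'$, yet divides none of $d_1',\ldots,d_q'$ because $e_t(d_k)\le\alpha_t<\alpha_t+1$ for every $k$. Adjoining one such variable for each uncovered generator produces a net $Y'\supseteq Y$ of $M_{\pol}$ in which, still, the only variable dividing $d_j'$ is $x_{i_j,\alpha_{i_j}}$.

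Finally I would pass to a minimal net. Shrink $Y'$ to a minimal net $Z\subseteq Y'$ of $M_{\pol}$. Because $Z$ must cover each $d_j'$ and $x_{i_j,\alpha_{i_j}}$ is the unique variable of $Y'$ dividing $d_j'$, we get $x_{i_j,\alpha_{i_j}}\in Z$ for all $j$, i.e. $Y\subseteq Z$ and so $\#Z\ge q$. On the other hand, Theorem \ref{nets 2} gives $\#Z\le\odom(S/M)=q$. Therefore $\#Z=q$, and $Z$ is the desired minimal net.

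The step I expect to be delicate is the middle one: guaranteeing that the variables adjoined to repair the covering failures never land on the $d_j'$, for this is exactly what keeps the original $q$ variables essential after shrinking. This rests entirely on property (ii) of Definition \ref{comb.def.} --- it is precisely the clause forcing an uncovered generator to strictly exceed $\lcm(D)$ in some variable $x_t$, which in turn frees up the high polarized variable $x_{t,\alpha_t+1}$ lying above all of $d_1',\ldots,d_q'$. Without that clause the added variables could coincide with covers of the $d_j'$ and the cardinality bound $\#Z\ge q$ would collapse.
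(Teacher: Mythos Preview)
Your proof is correct, but it follows a genuinely different route from the paper's. Both arguments start from the same witness $D=\{d_1,\ldots,d_q\}$ and the same candidate set $Y=\{x_{i_1,\alpha_{i_1}},\ldots,x_{i_q,\alpha_{i_q}}\}$, and both use the ``distinct representatives'' observation that $x_{i_j,\alpha_{i_j}}$ is the unique variable of $Y$ dividing $d_j'$. They diverge at the key step. The paper shows \emph{directly} that $Y$ is already a net: assuming some $m\in G$ is not divisible by any $x_{i_j}^{\alpha_{i_j}}$, it chooses such an $m$ with $\lcm(D\cup\{m\})$ divisibility-minimal and verifies that $D\cup\{m\}$ is a dominant set of size $q+1$ still satisfying Definition~\ref{comb.def.}(i)--(ii), contradicting $\odom(S/M)=q$. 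You instead enlarge $Y$ to a net $Y'$ using high polarized variables, shrink to a minimal net $Z$, and invoke Theorem~\ref{nets 2} for the upper bound $\#Z\le q$. Your route is clean and sidesteps the somewhat delicate check that the enlarged set $D\cup\{m\}$ satisfies condition (ii); the price is that it is not self-contained, since Theorem~\ref{nets 2} has its own nontrivial proof. The paper's argument is independent of Theorem~\ref{nets 2} and yields the stronger intermediate fact that \emph{every} minimal generator is already covered by the $x_{i_j}^{\alpha_{i_j}}$---a structural property of maximal $\odom$-witnesses that your approach recovers only a posteriori from the equality $Z=Y$.
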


\begin{proof}
By definition of order of dominance, for some sequence $1\leq i_1<\ldots<i_q\leq n$, there is a dominant subset $D=\{d_1,\ldots,d_q\}$ of $G$, with the following properties:
\begin{enumerate}[(i)]
\item each $d_j$ is dominant in $x_{i_j}$;
\item if $\lcm(D)=x_1^{\alpha_1}\ldots x_n^{\alpha_n}$, and $m$ is an element of $G$ dividing $\lcm(D)$, then $m$ is divisible by one of $x_{i_1}^{\alpha_{i_1}},\ldots,x_{i_q}^{\alpha_{i_q}}$.
\end{enumerate}
We claim that the set $G'=\{m\in G: m$ is not divisible by any of $x_{i_1}^{\alpha_{i_1}},\ldots,x_{i_q}^{\alpha_{i_q}}\}$ is empty. \\
Suppose, by means of contradiction, that $G'\neq\varnothing$. Let $G''=\{\lcm(D\cup\{m\}): m\in G'\}$. Let $\lcm(D\cup \{d\})$ be a minimal element of $G''$, in the sense that if $\lcm(D\cup \{m\}) \mid \lcm(D\cup \{d\})$, then $\lcm(D\cup \{m\})=\lcm(D\cup \{d\})$.\\
Note that $D\cup \{d\}$ is a dominant set. In fact, since $d$ is not divisible by any of $x_{i_1}^{\alpha_{i_1}},\ldots,x_{i_q}^{\alpha_{i_q}}$, by property (ii) we must have that $d\nmid \lcm(D)$. Hence, there exist 
$i\in \{1,\ldots,n\} \setminus \{i_1,\ldots,i_q\}$ such that $x_i$ appears with exponent $\delta> \alpha_i$ in the factorization of $d$.\\
Thus, $D\cup \{d\}$ is a dominant set such that 
\begin{enumerate}[(i)]
\item each $d_j$ is dominant in $x_{i_j}$, and $x_{i_j}$ appears with exponent $\alpha_{i_j}$ in the factorization of $d_j$. Also, $d$ is dominant in $x_i$, and $x_i$ appears with exponent $\delta$ in the factorization of $d$.
\item if $m\in G$ and $m\mid \lcm(D\cup \{d\})$, then $m$ is divisible by at least one of $x_{i_1}^{\alpha_{i_1}},\ldots,x_{i_q}^{\alpha_{i_q}},x_i^{\delta}$.  
\end{enumerate}
In fact, if such an $m$ is not divisible by any of $x_{i_1}^{\alpha_{i_1}},\ldots,x_{i_q}^{\alpha_{i_q}}$, then $m\in G'$, and $\lcm(D\cup\{m\})\in G''$. Since $\lcm(D\cup\{m\})\mid \lcm(D\cup\{d\})$, the minimality of $\lcm(D\cup\{d\})$ implies that $\lcm(D\cup\{m\})= \lcm(D \cup \{d\})$. Hence, $m$ must be divisible by $x_i^{\delta}$.\\
Thus, $\odom(S/M)\geq \#(D\cup\{d\})=q+1$, an absurd.\\
Therefore, $G'=\varnothing$, which means that every $m\in G$ is divisible by at least one of $x_{i_1}^{\alpha{i_1}},\ldots,x_{i_q}^{\alpha_{i_q}}$. After polarizing $D$, we obtain a dominant set $D_{\pol}=\{d'_1,\ldots,d'_q\}$, such that each $d'_j$ is dominant in $x_{i_j,\alpha_{i_j}}$. Also, every $m'\in G_{\pol}$ is divisible by at least one of $x_{i_1,\alpha_{i_1}},\ldots,x_{i_q,\alpha_{i_q}}$. Thus, $X=\{x_{i_1,\alpha_{i_1}},\ldots,x_{i_q,\alpha_{i_q}}\}$ is a minimal net of $M_{\pol}$.
\end{proof}

\begin{corollary} \label{nets 4}
Let $\Xi$ be the class of all minimal nets of $M_{\pol}$. Then 
\[\odom(S/M)=\max\{\#X: X\in \Xi\}.\]
\end{corollary}

\begin{proof}
By Theorem \ref{nets 2}, $\odom(S/M)\geq \max \{\# X: X\in \Xi\}$. By Theorem \ref{nets 3}, $\odom(S/M)\leq \max \{\#X: X\in \Xi\}$.
\end{proof}

Note that Corollary \ref{nets 4} gives an alternative definition of the concept of order of dominance; that is, $\odom(S/M)$ is the largest cardinality of a minimal net of $M_{\pol}$.

\begin{example}\label{odom by nets}
Let $M=(ae, be, ce, de, ab, cd)$. Note that the set $\{a,b,c,d\}$ is a minimal net of $M$. Since $\{a, b, c, d, e\}$ is not a minimal net of $M$, we must have that $\odom(S/M)=4$. More generally, suppose that $S=k[x_1,\ldots,x_n]$, and $M$ is a squarefree monomial ideal. If $x_1x_n,\ldots,x_{n-1}x_n$ are among the minimal generators of $M$, then $\{x_1,\ldots,x_{n-1} \}$ is a minimal net of $M$, and since $\{x_1,\ldots,x_{n} \}$ is not a minimal net of $M$, $\odom(S/M)=n-1$.
\end{example} 

\begin{corollary}\label{C-M} 
Let $M$ be a Scarf ideal. Then $M$ is Cohen-Macaulay if and only if all minimal nets of $M_{\pol}$ have the same cardinality.
\end{corollary}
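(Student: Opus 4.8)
The plan is to reduce the biconditional to a numerical comparison between $\pd(S/M)$ and $\codim(S/M)$, and then to translate both invariants into the language of minimal nets of $M_{\pol}$. First I would record the standard homological characterization of Cohen--Macaulayness: since $S$ is Cohen--Macaulay, the Auslander--Buchsbaum formula $\pd(S/M)+\depth(S/M)=n$ together with $\dim(S/M)=n-\codim(S/M)$ shows that $M$ is Cohen--Macaulay exactly when $\pd(S/M)=\codim(S/M)$. This turns the homological condition into an equality of two invariants, each of which I will rewrite.

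For the projective dimension the rewriting is immediate from earlier results: because $M$ is Scarf, Theorem \ref{ScarfPD} gives $\pd(S/M)=\odom(S/M)$, and Corollary \ref{nets 4} gives $\odom(S/M)=\max\{\#X:X\in\Xi\}$, where $\Xi$ denotes the class of all minimal nets of $M_{\pol}$. Thus the projective dimension equals the largest cardinality of a minimal net of $M_{\pol}$.

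The key step is to prove the companion fact for codimension, namely $\codim(S/M)=\min\{\#X:X\in\Xi\}$. By the net characterization of codimension recalled in the introduction, $\codim(S/M)$ is the smallest cardinality of a net of $M$, and this minimum is attained at a minimal net. Hence it suffices to show that the smallest cardinality of a net of $M$ agrees with the smallest cardinality of a net of $M_{\pol}$, which I would do by transporting nets in both directions. If $A=\{x_{i_1},\dots,x_{i_k}\}$ is a net of $M$, then $\{x_{i_1,1},\dots,x_{i_k,1}\}$ is a net of $M_{\pol}$ of the same size, since a generator divisible by $x_{i_j}$ polarizes to one divisible by $x_{i_j,1}$. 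Conversely, if $X$ is a net of $M_{\pol}$, then, because a polarized monomial divisible by $x_{i,r}$ is automatically divisible by $x_{i,1}$, the set $\{x_i : x_{i,r}\in X \text{ for some } r\}$ is a net of $M$ of cardinality at most $\#X$. These two inequalities force equality of the two minima, and since a minimum-cardinality net is automatically minimal, we obtain $\codim(S/M)=\min\{\#X:X\in\Xi\}$. (Alternatively, one may simply invoke the standard fact that polarization preserves codimension and then apply the net characterization directly to $M_{\pol}$ in $S_{\pol}$.)

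Finally I would assemble the three identities. We have that $M$ is Cohen--Macaulay if and only if $\pd(S/M)=\codim(S/M)$, which by the above is equivalent to $\max\{\#X:X\in\Xi\}=\min\{\#X:X\in\Xi\}$; and since every minimal net of $M_{\pol}$ has cardinality between this minimum and this maximum, that equality holds if and only if all minimal nets of $M_{\pol}$ have the same cardinality. I expect the main obstacle to be the middle paragraph, i.e.\ establishing $\codim(S/M)=\min\{\#X:X\in\Xi\}$ (the passage between nets of $M$ and nets of $M_{\pol}$); once that is in hand, the rest is a direct substitution of Theorem \ref{ScarfPD} and Corollary \ref{nets 4}.
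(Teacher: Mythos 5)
Your proposal is correct and follows essentially the same route as the paper: reduce Cohen--Macaulayness to $\pd(S/M)=\codim(S/M)$, apply Theorem \ref{ScarfPD} and Corollary \ref{nets 4} to identify $\pd(S/M)$ with the largest cardinality of a minimal net of $M_{\pol}$, and identify $\codim(S/M)$ with the smallest such cardinality. The only difference is that you supply a proof (via transporting nets between $M$ and $M_{\pol}$) of the fact that $\codim(S/M)$ equals the minimum cardinality of a minimal net of $M_{\pol}$, a step the paper asserts without argument; your verification of it is sound.
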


\begin{proof}
By Theorem \ref{ScarfPD}, $M$ is Cohen-Macaulay if and only if $\codim(S/M)=\odom(S/M)$. Now the result follows by noting that $\codim(S/M)$ and $\odom(S/M)$ are the shortest and largest cardinalities of minimal nets of $M_{\pol}$, respectively.
\end{proof}

\section{Further results}

In this section, we state and prove some of the main properties of the order of dominance.

\begin{theorem}\label{nets 5}
Let $M$ be a monomial ideal of $S=k[x_1,\ldots,x_n]$, minimally generated by $G$. The following are equivalent:
\begin{enumerate}[(i)]
\item $\pd(S/M)=n$.
\item $\betti_i(S/M)\geq {n\choose i}$, for all $i$.
\item $G$ contains a dominant set $D$ of cardinality $n$, such that no monomial of $G$ strongly divides $\lcm(D)$.
\item $\odom(S/M)=n$.
\item There is a minimal net of $M_{\pol}$, of cardinality $n$.
\end{enumerate}
\end{theorem}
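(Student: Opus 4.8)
The plan is to prove the five equivalences by establishing a cycle of implications, exploiting the tools already developed in the excerpt. I would organize the argument as $(i)\Rightarrow(iii)\Rightarrow(iv)\Rightarrow(v)\Rightarrow(iv)$ together with $(iii)\Rightarrow(ii)\Rightarrow(i)$, so that every statement is reachable from every other. Several of these arrows are nearly immediate from results proved earlier, so the real content concentrates in one place.

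First, the easy arrows. The implication $(iii)\Rightarrow(ii)$ is exactly Theorem \ref{Theorem 3.5}. The implication $(ii)\Rightarrow(i)$ is trivial: if $\betti_n(S/M)\geq\binom{n}{n}=1$, then $\betti_n(S/M)\neq 0$, so $\pd(S/M)\geq n$; since the resolution lives in a ring with $n$ variables we always have $\pd(S/M)\leq n$, forcing equality. For the equivalence of $(iii)$ and $(iv)$, I would note that $(iii)\Rightarrow(iv)$ is the content of the computation inside the proof of Theorem \ref{Theorem 3.5}: a dominant set of cardinality $n$ no monomial of which strongly divides $\lcm(D)$ forces every generator to be divisible by one of $x_1^{\alpha_1},\dots,x_n^{\alpha_n}$, which gives a sequence in $\mathscr{I}$ of length $n$, hence $\odom(S/M)=n$ (it cannot exceed $n$). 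Conversely $(iv)\Rightarrow(iii)$ follows by unwinding Definition \ref{comb.def.}: a length-$n$ sequence in $\mathscr{I}$ supplies a dominant set $D=\{d_1,\dots,d_n\}$ with $d_j$ dominant in $x_{i_j}$, and since $\{i_1,\dots,i_n\}$ must be all of $\{1,\dots,n\}$, condition (ii) of the definition says every divisor of $\lcm(D)$ in $G$ is divisible by some $x_{i_j}^{\alpha_{i_j}}$, which is precisely the statement that no monomial of $G$ strongly divides $\lcm(D)$.

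The equivalence $(iv)\Leftrightarrow(v)$ is immediate from Corollary \ref{nets 4}, which identifies $\odom(S/M)$ with the largest cardinality of a minimal net of $M_{\pol}$: if that maximum equals $n$ there is a minimal net of cardinality $n$, and conversely a minimal net of cardinality $n$ forces the maximum to be at least $n$, hence exactly $n$ since $\odom(S/M)\leq n$ always (by part (i) of the abstract, or directly since there are only $n$ variables up to polarization in each block). The only subtlety here, and the step I expect to require the most care, is verifying that $\odom(S/M)$ genuinely cannot exceed $n$; this is where I would lean on the fact that a sequence in $\mathscr{I}$ is indexed by a strictly increasing subsequence of $1,\dots,n$, so its length is bounded by $n$.

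The single genuinely substantive arrow is $(i)\Rightarrow(iii)$, asserting that maximal projective dimension forces the existence of such a dominant set. This is the converse direction and cannot be read off mechanically from the earlier lemmas, which all go from combinatorial hypotheses to Betti-number conclusions. Here I would argue that if $\pd(S/M)=n$, then the minimal resolution $\mathbb{F}$ has a nonzero $F_n$, so there is a Taylor symbol $[\sigma]$ with $\hdeg[\sigma]=n$; I would then analyze its multidegree and the subset of generators it records, using the consecutive-cancellation framework and the results of [Al2] invoked in Lemma \ref{lemma 1}, to extract a dominant set of cardinality $n$. The hard part will be controlling the multidegree of this top basis element precisely enough to conclude that no generator strongly divides its $\lcm$; I would expect to reduce to the polarized ideal $M_{\pol}$ (where all exponents are $1$ and dominance is cleanest) and then invoke the net characterization in reverse, so that $(i)\Rightarrow(v)\Rightarrow(iii)$ may in fact be the more tractable route through this obstacle.
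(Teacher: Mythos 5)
Your reduction of the theorem to the single arrow (i)$\Rightarrow$(iii) is sound, and the easy arrows are all handled correctly and essentially as the paper handles them: (iii)$\Rightarrow$(ii) is Theorem \ref{Theorem 3.5}, (ii)$\Rightarrow$(i) follows from Hilbert's Syzygy Theorem, (iii)$\Leftrightarrow$(iv) is the unwinding of Definition \ref{comb.def.} (your argument that divisibility by some $x_{i_j}^{\alpha_{i_j}}$ is the negation of strong divisibility is exactly the paper's), and (iv)$\Leftrightarrow$(v) follows from Corollary \ref{nets 4} together with $\odom(S/M)\leq n$. One structural slip: the organization you announce at the outset, $(i)\Rightarrow(iii)\Rightarrow(iv)\Rightarrow(v)\Rightarrow(iv)$ plus $(iii)\Rightarrow(ii)\Rightarrow(i)$, is not a valid equivalence scheme, since from (iv) or (v) one can never return to (i), (ii), or (iii); your later explicit proof of (iv)$\Rightarrow$(iii) repairs this, but the announced cycle should be corrected.

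The genuine gap is exactly where you predicted it: (i)$\Rightarrow$(iii) is never proved, only planned. And the plan as sketched cannot be executed with the tools in this paper: every result here (Lemmas \ref{lemma 1} and \ref{lemma 2}, Theorems \ref{Theorem 3.4}, \ref{Theorem 3.5}, \ref{nets 1}--\ref{nets 3}, and [Al2, Theorem 5.2] as used inside Lemma \ref{lemma 1}) runs in one direction only, from a combinatorial hypothesis to the existence of basis elements or lower bounds on Betti numbers; none of them lets you start from a surviving Taylor symbol of homological degree $n$ and extract a dominant set, and the $n$ generators appearing in such a symbol need not form one. Your fallback route $(i)\Rightarrow(v)\Rightarrow(iii)$ has the same defect, since nothing proved here goes from $\pd$ to nets. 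The paper itself does not close this gap internally either: it cites [Al2, Corollary 5.3] for the equivalence (i)$\Leftrightarrow$(iii), so the implication you flagged as hard is precisely the imported result. If you want a self-contained proof, a clean one is algebraic rather than Taylor-theoretic: $\pd(S/M)=n$ iff $\operatorname{depth}(S/M)=0$ (Auslander--Buchsbaum), iff $(x_1,\ldots,x_n)=M:u$ for some monomial $u\notin M$. For each $i$ pick a minimal generator $m_i$ dividing $x_iu$; since $m_i\nmid u$, the exponent of $x_i$ in $m_i$ is $\deg_{x_i}(u)+1$ while $\deg_{x_j}(m_i)\leq\deg_{x_j}(u)$ for $j\neq i$. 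Then $D=\{m_1,\ldots,m_n\}$ is a dominant set of cardinality $n$ with $\lcm(D)=u\,x_1\cdots x_n$, and any generator strongly dividing $\lcm(D)$ would divide $u$, contradicting $u\notin M$. Some such argument, or the explicit citation of [Al2, Corollary 5.3], is required; as written, your proposal leaves the one substantive implication unestablished.
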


\begin{proof}
(i)$\Leftrightarrow$ (iii) [Al2, Corollary 5.3].\\
(iii)$\Rightarrow$ (iv) Let $\lcm(D)=x_1^{\alpha_1} \cdots x_n^{\alpha_n}$. Since $D$ is dominant, and $\# D=n$, $D$ can be expressed in the form $D=\{d_1,\ldots,d_n\}$, where each $d_j$ is dominant in $x_j$. Then, the exponent with which $x_j$ appears in the factorization of $d_j$ must be $\alpha_j$. In addition, since no monomial of $G$ strongly divides $x_1^{\alpha_1} \cdots x_n^{\alpha_n}$, it follows that every $m\in G$ dividing $\lcm(D)$ must be divisible by one of $x_1^{\alpha_1},\ldots,x_n^{\alpha_n}$. Hence, $\odom(S/M)\geq \# D=n$. By Theorem \ref{Theorem 3.4} and Hilbert's Syzygy Theorem [Ei], $\odom(S/M)\leq \pd(S/M)\leq n$.\\
(iv)$\Rightarrow$(i) It follows from Theorem \ref{Theorem 3.4} and Hilbert's Syzygy Theorem [Ei].\\
(iv)$\Rightarrow$(ii) It follows from Theorem \ref{Theorem 3.4}.\\
(ii)$\Rightarrow$(i) Obvious.\\
(iv)$\Rightarrow$(v) It follows from Theorem \ref{nets 3}.\\
(v)$\Rightarrow$(iv) By Corollary \ref{nets 4}, $\odom(S/M)\geq n$. By Theorem \ref{Theorem 3.4} and Hilbert's Syzygy theorem [Ei], $\odom(S/M)\leq \pd(S/M)\leq n$.
\end{proof}

\begin{theorem}\label{nets 6}
Let $M$ be a monomial ideal in $S$. Then:
\begin{enumerate}[(i)]
\item $\pd(S/M)\geq \odom(S/M)\geq \codim(S/M)$.
\item Let $\codim(S/M)=p$ and $\odom(S/M)=q$. If $q > p$ then \[ \sum\limits_{i=0}^n \betti_i(S/M)\geq 2^q > 2^p+2^{p-1}.\] 
\end{enumerate}
\end{theorem}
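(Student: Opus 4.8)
The plan is to obtain everything from Theorem \ref{Theorem 3.4} together with the description of both $\codim(S/M)$ and $\odom(S/M)$ in terms of nets. For part (i), the inequality $\pd(S/M)\geq \odom(S/M)$ is immediate: writing $q=\odom(S/M)$, Theorem \ref{Theorem 3.4} gives $\pd(S/M)\geq q$. For the inequality $\odom(S/M)\geq \codim(S/M)$, I would recall that $\codim(S/M)$ is by definition the smallest cardinality of a net of $M$. A net of smallest cardinality cannot properly contain another net, so it is automatically a \emph{minimal} net $X$ with $\#X=\codim(S/M)$. Corollary \ref{nets 1'} then yields $\odom(S/M)\geq \#X=\codim(S/M)$, which completes part (i). (Alternatively, one may phrase this through Corollary \ref{nets 4}: since $\odom(S/M)$ and $\codim(S/M)$ are the largest and smallest cardinalities of minimal nets of $M_{\pol}$, the inequality $\odom\geq\codim$ is immediate; this route needs the standard fact that codimension is preserved under polarization.)

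For part (ii), assume $q=\odom(S/M) > p=\codim(S/M)$. The key input is again Theorem \ref{Theorem 3.4}, giving $\betti_r(S/M)\geq \binom{q}{r}$ for every $r$. Since $q\leq \pd(S/M)\leq n$ by part (i) and Hilbert's Syzygy Theorem, the indices $r=0,\ldots,q$ all lie in the range $0,\ldots,n$, so
\[\sum_{i=0}^n \betti_i(S/M)\geq \sum_{r=0}^q \binom{q}{r}=2^q.\]
For the second inequality, the hypothesis $q>p$ forces $q\geq p+1$, whence $2^q\geq 2^{p+1}=2^p+2^p>2^p+2^{p-1}$, the last step using $2^p>2^{p-1}$. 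Chaining these gives the asserted $\sum_{i=0}^n\betti_i(S/M)\geq 2^q>2^p+2^{p-1}$.

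The argument is mostly bookkeeping once Theorem \ref{Theorem 3.4} and Corollary \ref{nets 1'} are available, so there is no serious obstacle. The one point deserving care is the identification of $\codim(S/M)$ with a minimal net: I would make explicit that a net of minimum cardinality really is minimal, so that Corollary \ref{nets 1'} applies to it. The closing numerical estimate $2^q>2^p+2^{p-1}$ is elementary and uses only the integrality consequence $q\geq p+1$ of the strict inequality $q>p$.
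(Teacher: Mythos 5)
Your proposal is correct and follows essentially the same route as the paper: part (i) combines Theorem \ref{Theorem 3.4} with Corollary \ref{nets 1'} and the identification of $\codim(S/M)$ with the smallest cardinality of a minimal net, and part (ii) sums the binomial bounds $\betti_i(S/M)\geq\binom{q}{i}$ and uses $q\geq p+1$. Your extra care (checking that a minimum-cardinality net is a minimal net, and that $q\leq n$ so the sum captures all binomial terms) only makes explicit what the paper leaves implicit.
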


\begin{proof}
(i) By Theorem \ref{Theorem 3.4}, $\pd(S/M)\geq \odom(S/M)$. By Corollary \ref{nets 1'}, $\odom(S/M)\geq \# X$ for every minimal net $X$ of $M$. Since $\codim(S/M)$ is the smallest cardinality of a minimal net of $M$, it follows that $\odom(S/M)\geq \codim(S/M)$. \\
(ii) By Theorem \ref{Theorem 3.4}, $\betti_i(S/M)\geq {q\choose i}$. Therefore,
\[\sum\limits_{i=0}^n \betti_i(S/M)\geq 2^q \geq 2^{p+1}=2^p+2^p>2^p+2^{p-1}.\]
\end{proof}

In the opening paragraph of this article, we said that one of the important properties of the codimension is that it tells how short a resolution can be. In Theorem \ref{nets 6} (i), we see that the order of dominance improves this lower bound. Likewise, in a particular case, Theorem \ref{nets 6} (ii) improves an interesting lower bound found by Boocher and Seiner. In [BS], the authors prove that if $M$ is not a complete intersection, then $\sum\limits \betti_i(S/M)\geq 2^p+2^{p-1}$. Under the hypothesis that $\odom(S/M) > \codim(S/M)$, Theorem \ref{nets 6} (ii), sharpens their lower bound. 

As a marginal note, we highlight the fact that $\odom(S/M) > \codim(S/M)$ if and only if $M_{\pol}$ has two minimal nets of different cardinality. It is also worth mentioning that the inequality $\odom(S/M) > \codim(S/M)$ implies that $M$ is not Cohen-Macaulay, but the converse is false. Indeed, if $M=(ab, cd, ac, bd)$ then  $\odom(S/M)=\codim(S/M)=2$, but $\pd(S/M)=3$. 

\begin{corollary}\label{cor.}
Let $M = (m_1,\ldots,m_q)$. 
\begin{enumerate}[(i)]
\item If $\odom(S/M)=n-1$, then $\pd(S/M)=n-1$.
\item  If $\odom(S/M)=q-1$, then $\pd(S/M)=q-1$.
\end{enumerate}
\end{corollary}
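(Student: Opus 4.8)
The plan is to leverage the already-established inequalities $\codim(S/M)\leq\odom(S/M)\leq\pd(S/M)\leq n$ (Theorem \ref{nets 6}(i) together with Hilbert's Syzygy Theorem) and squeeze $\pd(S/M)$ from both sides. For part (i), assume $\odom(S/M)=n-1$. Then immediately $\pd(S/M)\geq\odom(S/M)=n-1$, so $\pd(S/M)\in\{n-1,n\}$. To rule out $\pd(S/M)=n$, I would invoke the equivalence (i)$\Leftrightarrow$(iv) of Theorem \ref{nets 5}, which states that $\pd(S/M)=n$ holds if and only if $\odom(S/M)=n$. Since we are assuming $\odom(S/M)=n-1\neq n$, this forces $\pd(S/M)\neq n$, and therefore $\pd(S/M)=n-1$. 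This part is essentially a one-line deduction from the machinery already in place.

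For part (ii), assume $\odom(S/M)=q-1$, where $q$ is the number of minimal generators. Again $\pd(S/M)\geq\odom(S/M)=q-1$. The upper bound $\pd(S/M)\leq q$ is automatic, since the Taylor resolution $\mathbb{T}_M$ has length $q$ and every minimal resolution is obtained from it by consecutive cancellations; thus $\pd(S/M)\in\{q-1,q\}$. The remaining task is to exclude $\pd(S/M)=q$. Here the natural tool is Theorem \ref{Taylor}, which asserts that $\mathbb{T}_M$ is a minimal resolution if and only if $\odom(S/M)=q$. I would argue as follows: if $\pd(S/M)=q$, then the top module $F_q$ of the minimal resolution is nonzero; but $F_q$ in the Taylor resolution is already rank one (a single Taylor symbol $[m_1,\ldots,m_q]$), so no cancellation can remove it, and $\betti_q(S/M)\neq 0$ forces the symbol $[m_1,\ldots,m_q]$ to survive minimally. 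The cleanest route, however, is to contrapose Theorem \ref{Taylor} directly: since $\odom(S/M)=q-1\neq q$, the Taylor resolution is not minimal, so at least one cancellation occurs; I then need that such a cancellation necessarily lowers the top homological degree.

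The subtle point—and the step I expect to be the main obstacle—is confirming that $\pd(S/M)=q$ actually forces $\odom(S/M)=q$, i.e.\ that $\betti_q(S/M)\neq 0$ cannot coexist with $\odom(S/M)<q$. The clean argument is that $\betti_q(S/M)\neq 0$ means the single top Taylor symbol $[m_1,\ldots,m_q]$ survives to the minimal resolution; by the characterization underlying Theorem \ref{Taylor} (namely \cite{Al1, Theorem 4.4}, that $\mathbb{T}_M$ is minimal iff $\{m_1,\ldots,m_q\}$ is dominant), the survival of the top symbol is equivalent to $\{m_1,\ldots,m_q\}$ being a dominant set, which in turn gives $\odom(S/M)=q$ by Definition \ref{comb.def.}. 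Hence $\pd(S/M)=q\Rightarrow\odom(S/M)=q$, and contrapositively $\odom(S/M)=q-1\Rightarrow\pd(S/M)\neq q$, yielding $\pd(S/M)=q-1$.

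In short, both parts reduce to the principle that $\pd(S/M)$ attains its maximum possible value (here $n$ in part (i), $q$ in part (ii)) \emph{only when} $\odom(S/M)$ attains the same maximal value—part (i) using Theorem \ref{nets 5} and part (ii) using Theorem \ref{Taylor}. Combined with the universal lower bound $\pd(S/M)\geq\odom(S/M)$ from Theorem \ref{nets 6}(i), the value of $\pd(S/M)$ is pinned down exactly. The only care required is in part (ii), where I must make sure the top Taylor symbol's survival is correctly identified with the dominance condition governing $\odom$; once that identification is granted from the cited earlier results, the proof is immediate.
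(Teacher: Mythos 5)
Your overall strategy coincides with the paper's: part (i) is essentially verbatim the paper's argument (sandwich $\pd(S/M)$ between $\odom(S/M)=n-1$ and $n$ via Theorem \ref{nets 6}(i) and Hilbert's Syzygy Theorem, then rule out $\pd(S/M)=n$ by the equivalence (i)$\Leftrightarrow$(iv) of Theorem \ref{nets 5}), and part (ii) has the same skeleton (sandwich between $q-1$ and $q$, then exclude $\pd(S/M)=q$ by forcing $\odom(S/M)=q$). However, in part (ii) there is a genuine justification gap at exactly the step you flagged as the crux. You assert that survival of the top Taylor symbol $[m_1,\ldots,m_q]$ in the minimal resolution is \emph{equivalent} to $\{m_1,\ldots,m_q\}$ being dominant, and you attribute this to [Al1, Theorem 4.4]. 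That theorem says only that $\mathbb{T}_M$ is minimal if and only if $\{m_1,\ldots,m_q\}$ is dominant. The direction you actually need --- $\betti_q(S/M)\neq 0$ implies dominance --- does not follow from it: a priori, survival of the single top symbol is much weaker than minimality of the entire Taylor resolution, and promoting the former to the latter is precisely the content of [Al1, Corollary 4.9] ($\pd(S/M)=q$ implies $\mathbb{T}_M$ is minimal), which is the result the paper cites at this point and which you neither invoke nor prove.

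The gap is real but easily closed within the paper's framework, even without that citation, by arguing contrapositively: if $\{m_1,\ldots,m_q\}$ is not dominant, then some $m_i$ divides $\lcm(m_1,\ldots,\widehat{m_i},\ldots,m_q)$, so the coefficient of $[m_1,\ldots,\widehat{m_i},\ldots,m_q]$ in $f_q([m_1,\ldots,m_q])$ is $\pm 1$, a unit. Performing this consecutive cancellation removes the unique Taylor symbol in homological degree $q$, and any minimal resolution obtained by further cancellations then has nothing in degree $q$, so $\betti_q(S/M)=0$, i.e.\ $\pd(S/M)<q$. With this two-line argument (or with the citation of [Al1, Corollary 4.9], as in the paper) inserted in place of the appeal to Theorem 4.4, your proof of (ii) is complete; the remaining step, that dominance of the full generating set gives $\odom(S/M)=q$, is indeed covered by Theorem \ref{Taylor} or directly by Definition \ref{comb.def.}, as you say. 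Part (i) needs no repair.
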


\begin{proof}
(i) By Theorem \ref{nets 6} (i), if $\odom(S/M)=n-1$, then either $\pd(S/M)=n-1$ or $\pd(S/M)=n$. But the equivalent statements (i) and (iv) of Theorem \ref{nets 5} show that  $\pd(S/M) \neq n$.\\
(ii) By Theorem \ref{nets 6} (i), $\pd(S/M)\geq \odom(S/M) = q-1$, and given that $\mathbb{T}_M$ has length $q$, either $\pd(S/M) = q-1$ or $\pd(S/M)=q$. Suppose that $\pd(S/M)=q$. According to [Al1, Corollary 4.9], $\mathbb{T}_M$ is minimal, and by Theorem \ref{Taylor}, $\odom(S/M) = q$, a contradiction. Therefore, $\pd(S/M) = q-1$. 
\end{proof}

It follows from Corollary \ref{cor.} (i) that the ideals defined in Example \ref{odom by nets} must have projective dimension $n-1$.

\begin{theorem}\label{Theorem pd odom}
Let $M$ be an arbitrary monomial ideal. Then $\pd(S/M) = 1$ if and only if $\odom(S/M) = 1$.
\end{theorem}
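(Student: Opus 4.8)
The plan is to prove the equivalence $\pd(S/M)=1 \iff \odom(S/M)=1$ by exploiting the lower-bound inequality $\codim(S/M)\leq \odom(S/M)\leq \pd(S/M)$ from Theorem \ref{nets 6} (i), which immediately handles one direction and reduces the other to a sharp structural classification. For the direction $\odom(S/M)=1 \Rightarrow \pd(S/M)=1$, I would argue that $\odom(S/M)\geq 1$ always (the ideal is nonzero, so any single minimal generator dominant in some variable gives a sequence of length $1$ in $\mathscr{I}$), and then use $\odom(S/M)=1$ together with the inequality $\pd(S/M)\geq\odom(S/M)=1$ to get $\pd(S/M)\geq 1$. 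The real content is the reverse bound: I must show that $\odom(S/M)=1$ forces $\pd(S/M)\leq 1$, equivalently that $S/M$ has a free resolution of length at most $1$.

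The most efficient route is to use the net characterization from Corollary \ref{nets 4}: $\odom(S/M)$ equals the largest cardinality of a minimal net of $M_{\pol}$. So $\odom(S/M)=1$ means \emph{every} minimal net of $M_{\pol}$ is a singleton, i.e. there is a single variable of $S_{\pol}$ dividing every minimal generator of $M_{\pol}$, and moreover this is forced for each minimal net. The key step is to translate "every minimal net is a singleton" into a concrete structural statement about $M_{\pol}$ — I expect it to say that $M_{\pol}$ is a principal ideal after factoring out a common variable, i.e. $M_{\pol}=x\cdot(\text{monomials with no common further structure})$ collapsing to a single generator. Concretely, since classically $\pd(S/M)=1$ holds if and only if $M$ is principal (a nonzero principal monomial ideal $M=(m)$ has the length-one Koszul/Taylor resolution $0\to S\to S\to S/M\to 0$), I would aim to show that $\odom(S/M)=1$ forces $M_{\pol}$, and hence $M$, to be principal, or more precisely to have $\pd=1$.

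The cleanest argument combines Corollary \ref{nets 4} with a direct analysis: if $\odom(S/M)=1$, I claim the minimal generating set $G_{\pol}$ of $M_{\pol}$ has a single element. Indeed, suppose $G_{\pol}$ had two distinct minimal generators $m_1',m_2'$. Being squarefree (polarization is squarefree) and distinct minimal generators, neither divides the other, so there is a variable $x$ dividing $m_1'$ but not $m_2'$ and a variable $y$ dividing $m_2'$ but not $m_1'$. Then I would build two minimal nets witnessing incompatible singleton requirements, or directly exhibit a dominant set of size $2$ satisfying the conditions of Definition \ref{comb.def.}, contradicting $\odom(S/M)=1$. The main obstacle is making this contradiction airtight: I must verify that from two distinct squarefree minimal generators I can always extract a dominant pair $\{d_1,d_2\}$ meeting condition (ii) of Definition \ref{comb.def.} (that every generator dividing $\lcm(d_1,d_2)$ is divisible by one of the chosen prime powers), rather than merely finding them not to divide one another.

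Once $M_{\pol}=(m')$ is principal, polarization theory gives $\pd(S/M)=\pd(S_{\pol}/M_{\pol})=1$, completing the equivalence. I would write the final proof as: the forward direction is immediate from Theorem \ref{nets 6} (i) since $\pd(S/M)=1$ gives $\odom(S/M)\leq 1$ while $\odom(S/M)\geq\codim(S/M)\geq 1$; the reverse direction uses Corollary \ref{nets 4} to force $M_{\pol}$ principal and then the standard length-one resolution of a principal ideal. The step I expect to fight with is the passage "$\odom(S/M)=1 \Rightarrow M_{\pol}$ principal," since a priori a monomial ideal can have $\odom=\codim=1$ without being principal — as the remark after Theorem \ref{nets 6} warns, $\odom=\codim$ does not force Cohen-Macaulayness — so I must check carefully that the extremal case $\odom=1$ (not merely $\codim=1$) genuinely rules out multiple minimal generators, likely by showing any second generator produces a length-two dominant sequence in $\mathscr{I}$.
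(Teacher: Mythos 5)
Your forward direction ($\pd(S/M)=1\Rightarrow\odom(S/M)=1$ via Theorem \ref{nets 6} (i)) is correct and is exactly what the paper does, and your reduction of the converse to the claim that $\odom(S/M)=1$ forces $M_{\pol}$ to be principal is sound (principality gives $\pd(S/M)=\pd(S_{\pol}/M_{\pol})=1$). The genuine gap is that claim itself, which you flag but never prove, and the concrete strategy you propose for it --- taking the two given squarefree minimal generators $m_1',m_2'$ and using \emph{them} as the dominant pair --- provably fails. Counterexample: let the ideal be generated by all squarefree quadratic monomials in $k[a_1,a_2,b_1,b_2]$ (a squarefree ideal, so it is its own polarization), and take $m_1'=a_1a_2$, $m_2'=b_1b_2$. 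The only possible dominant variables are $x\in\{a_1,a_2\}$ for $m_1'$ and $y\in\{b_1,b_2\}$ for $m_2'$; for every such choice, the complementary generator (the product of the $a$-variable different from $x$ and the $b$-variable different from $y$) divides $\lcm(m_1',m_2')=a_1a_2b_1b_2$ yet is divisible by neither $x$ nor $y$, so condition (ii) of Definition \ref{comb.def.} fails for \emph{every} choice of variables attached to this pair. The order of dominance here is nonetheless at least $2$ --- for instance the pair $\{a_1a_2,a_1b_1\}$ with dominant variables $a_2,b_1$ satisfies both conditions of Definition \ref{comb.def.} --- but no valid witness uses the pair you fixed. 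So ``any second generator produces a length-two dominant sequence in $\mathscr{I}$'' is false as stated: the witnessing dominant set must be chosen globally, and making that choice is the entire difficulty of the theorem.

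This is precisely the hole the paper fills with its net machinery. The paper proves the contrapositive in the squarefree case: if $M=(m_1,\ldots,m_q)$ with $q\geq 2$, it factors out $m=\gcd(m_1,\ldots,m_q)$; the deflated ideal $M'=(m_1/m,\ldots,m_q/m)$ satisfies $\codim(S/M')\geq 2$, so a minimal net $X$ of $M'$ has $\#X\geq 2$; squarefreeness forces the variables of $m$ to be disjoint from those of the $m_i/m$, whence $X$ is still a \emph{minimal} net of $M$; then Corollary \ref{nets 1'} (whose engine, Theorem \ref{nets 1}, is the recursive construction converting a minimal net into a dominant set satisfying condition (ii)) yields $\odom(S/M)\geq\#X\geq 2$, and the general case follows by polarization invariance of $\pd$ and $\odom$. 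Your alternative one-line suggestion (``build two minimal nets witnessing incompatible singleton requirements'') can in fact be completed in the same spirit: with $q\geq 2$ incomparable squarefree generators, the variables that do \emph{not} divide every generator form a net of $M_{\pol}$, hence contain a minimal net, which cannot be a singleton, so Corollary \ref{nets 4} gives $\odom(S/M)\geq 2$. But your proposal never carries this out, and without Corollary \ref{nets 1'} or Corollary \ref{nets 4} (or an equivalent device) the contradiction with $\odom(S/M)=1$ is never reached.
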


\begin{proof}
First, we will consider the case where $M$ is squarefree. Suppose that $\pd(S/M)\ge 2$. Then $M$ must have at least $2$ minimal generators. That is, $M = (m_1,\ldots,m_q)$, with $q\ge 2$. Let 
$m = \gcd(m_1,\ldots,m_q) $, and let $M' = (m'_1,\ldots,m'_q)$, with $m'_i = \dfrac{m_i}{m}$. Notice that $\gcd(m'_1,\ldots,m'_q) = 1$, and hence, $\codim(S/M')\ge 2$. 

Let $X$ be a minimal net of $M'$. Then $\#X\ge 2$. Since every $m'_i$ is divisible by some variable of $X$, so is every $m_i$. Thus, $X$ is a net of $M$. Suppose, by means of contradiction, that $X$ is not a minimal net of $M$. Then, there is a proper subset $X'$ of $X$ that is a net of $M$ but not of $M'$. Therefore, there are a variable $x\in X'$ and an integer $i$, such that $x\mid m_i = m m'_i$ and 
$x\nmid m'_i$. This implies that $x\mid m$ and, since $M$ is squarefree, $x$ does not divide any of $m'_1,\ldots,m'_q$. Therefore, $X\setminus$\{x\} is a net of $M'$, a contradiction. It follows that $X$ is a minimal net of $M$ and, by Corollary \ref{nets 1'}, $\odom(S/M)\ge \#X\ge 2$. 

We have proven that if $\pd(S/M)\ne 1$, then $\odom(S/M)\ne 1$. On the other hand, it follows from Theorem \ref{nets 6} that if $\pd(S/M) = 1$ then $\odom(S/M) = 1$, which proves the theorem in the squarefree case. Now, the general case follows from the fact that $\odom(S/M) = \odom(S/M_{\pol})$ and $\pd(S/M) = \pd(S/M_{\pol})$.
\end{proof}

As Theorems \ref{ScarfPD}, \ref{nets 5}, \ref{Theorem pd odom}, and Corollary \ref{cor.} show, $\odom(S/M)$ is a good tool to determine $\pd(S/M)$. This is a property that distinguishes $\odom(S/M)$ from $\codim(S/M)$. In fact, Theorem \ref{Theorem pd odom} would be far from true if we replaced $\odom(S/M)$ with $\codim(S/M)$. For example, if $M=(x_1^2, x_1x_2,x_1x_3,\ldots,x_1x_n)$, then $\codim(S/M)=1$, but $\pd(S/M)=n$. The next result uses this attribute of $\odom(S/M)$ to describe monomial ideals in 3 variables.

\begin{corollary} \label{last}
Let $M$ be a monomial ideal of $S = k[x_1,x_2,x_3]$. Then, 
\begin{enumerate}[(i)]
\item $\pd(S/M) = \odom(S/M)$.
\item $M$ is Cohen-Macaulay if and only if all minimal nets of $M_{\pol}$ have the same cardinality.
\end{enumerate}
\end{corollary}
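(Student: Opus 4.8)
The plan is to reduce both parts of Corollary \ref{last} to results already established, treating the $n=3$ case as a special instance where the general bounds become tight. The key observation is that for a polynomial ring in three variables, Hilbert's Syzygy Theorem forces $\pd(S/M)\leq 3$, and the possible values $1,2,3$ are each controlled by the order of dominance through previously proven equivalences.

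For part (i), I would argue by cases on the value of $\odom(S/M)$, which lies in $\{1,2,3\}$ by Theorem \ref{nets 6} (i) together with $\pd(S/M)\leq n=3$. If $\odom(S/M)=3=n$, then Theorem \ref{nets 5} (the equivalence of (i) and (iv)) gives $\pd(S/M)=3$ immediately. If $\odom(S/M)=2=n-1$, then Corollary \ref{cor.} (i) yields $\pd(S/M)=n-1=2$. If $\odom(S/M)=1$, then Theorem \ref{Theorem pd odom} gives $\pd(S/M)=1$. In every case $\pd(S/M)=\odom(S/M)$, so the equality holds for all monomial ideals in three variables.

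For part (ii), recall that $M$ is Cohen-Macaulay precisely when $\pd(S/M)=\codim(S/M)$ (the Auslander--Buchsbaum formula applied in the graded setting). By part (i), $\pd(S/M)=\odom(S/M)$, so the Cohen-Macaulay condition becomes $\odom(S/M)=\codim(S/M)$. Now I would invoke the net-theoretic characterizations: by Corollary \ref{nets 4}, $\odom(S/M)$ is the largest cardinality of a minimal net of $M_{\pol}$, while $\codim(S/M)$ equals the smallest such cardinality (codimension is preserved under polarization, and it is the minimum size of a net). Hence $\odom(S/M)=\codim(S/M)$ if and only if the largest and smallest minimal nets of $M_{\pol}$ coincide in size, which is exactly the statement that all minimal nets of $M_{\pol}$ have the same cardinality.

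The argument is essentially a bookkeeping assembly of prior results, so I anticipate no serious obstacle; the only point requiring mild care is ensuring the case analysis in part (i) is exhaustive, which it is since $\odom(S/M)\in\{1,2,3\}$ and each value is covered. One might worry that part (ii) merely restates Corollary \ref{C-M} without the Scarf hypothesis, but the extra ingredient here is precisely part (i), which supplies $\pd(S/M)=\odom(S/M)$ for \emph{all} monomial ideals in three variables rather than only Scarf ones. I would make sure to cite the Auslander--Buchsbaum relation $\pd(S/M)=\codim(S/M)$ for Cohen-Macaulay modules explicitly, since it is the one external fact not packaged into the earlier numbered results.
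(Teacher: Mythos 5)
Your proposal is correct and follows essentially the same route as the paper: part (i) is assembled from Theorem \ref{nets 5} and Theorem \ref{Theorem pd odom} (the paper handles the middle case $\pd(S/M)=2$ by elimination from the two biconditionals, whereas you invoke Corollary \ref{cor.} (i) directly for $\odom(S/M)=2$ --- a cosmetic difference), and part (ii) combines part (i) with Auslander--Buchsbaum and the fact that $\codim(S/M)$ and $\odom(S/M)$ are the smallest and largest cardinalities of minimal nets of $M_{\pol}$, exactly as the paper does.
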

\begin{proof}
(i) By Theorem \ref{nets 5}, $\pd(S/M) = 3$ if and only if $\odom(S/M) = 3$. By Theorem \ref{Theorem pd odom}, $\pd(S/M) = 1$ if and only if $\odom(S/M) = 1$. By the preceding two statements, $\pd(S/M)\ne 1,3$ if and only if $\odom(S/M)\ne 1,3$, but this is equivalent to saying that $\pd(S/M) = 2$ if and only if $\odom(S/M) = 2$.\\
(ii) By part (i), $M$ is Cohen Macaulay if and only if $\codim(S/M) = \odom(S/M)$. Now, the theorem follows from the fact that $\codim(S/M)$ and $\odom(S/M)$ are the shortest and largest cardinalities of minimal nets of $M_{\pol}$, respectively.
\end{proof}

\section{Final remarks}
Note that Definition \ref{comb.def.} and Corollary \ref{nets 4} give two combinatorial interpretations of the notion of order of dominance. Below we reinterpret this invariant from an algebraic perspective.

It follows from its definition, that a net of $M$ is precisely the minimal generating set of a monomial prime ideal containing $M$. In particular, a minimal net is the minimal generating set of an associated prime of $M$. Since $\odom(S/M)$ is the largest cardinality of a minimal net of $M_{\pol}$, we can redefine the order of dominance of $S/M$ as the codimension of the largest associated prime of $M_{\pol}$ (to the expert, $\odom(S/M)$ is simply the ``big height" of $M_{\pol}$).

From this perspective, the proof of Theorem \ref{Theorem 3.4} follows by noting that polarization preserves Betti numbers and, localizing at this largest associated prime can only decrease Betti numbers. After localizing, we obtain a complete intersection, which is therefore minimally resolved by the Koszul complex. The ranks of the free modules in the Koszul complex provide the lower bounds for Theorem \ref{Theorem 3.4}.

The simple but powerful algebraic tools used in this last proof of Theorem \ref{Theorem 3.4} may have the reader wonder why the entirety of this work was based on the combinatorial approach to the notion of order of dominance. The short answer is that the combinatorial approach yields more information than the algebraic one. In fact, the proof of Theorem \ref{Theorem 3.4} relies on Lemma \ref{lemma 2}, a result that describes the Taylor symbols of the minimal resolution of $S/M$. Thus, Lemma \ref{lemma 2} is interesting in itself, but the algebraic perspective would have not led to such result, for once we polarize and localize, we lose track of the Taylor symbols of $S/M$. (Theorems \ref{Taylor}, \ref{Theorem pd odom},  Corollary \ref{cor.}, and the equivalence between parts (iii) and (iv) of Theorem \ref{nets 5} strongly rely on the combinatorial approach as well.)

\bigskip

\noindent \textbf{Acknowledgements}: Many thanks to an anonymous referee for his or her excellent comments that helped to improve the presentation of this paper. I am also grateful to my dear wife Danisa for her support and encouragement, and for typing this article.

\end{document}